\definecolor{lightblue}{rgb}{0.22,0.45,0.70}
\newcommand\bb{\boldsymbol b}
\newcommand\bq{\boldsymbol q}
\newcommand\bu{\boldsymbol u}
\newcommand\bv{\boldsymbol v}
\newcommand\bzeta{\boldsymbol \zeta}
\newcommand\cero{\boldsymbol 0}
\newcommand\bnabla{\boldsymbol \nabla}
\newcommand\beps{\boldsymbol \varepsilon}
\newcommand\bnu{\boldsymbol \nu}
\newcommand\bsigma{\boldsymbol \sigma}
\newcommand{\enorm}[1]{{\left\vert\kern-0.25ex\left\vert\kern-0.25ex\left\vert #1 
    \right\vert\kern-0.25ex\right\vert\kern-0.25ex\right\vert}}
\renewcommand{\AA}{\mathcal{A}}
\newcommand{\BB}{\mathcal{B}}
\newcommand{\cT}{\mathcal{T}}
\newcommand{\Ker}{\operatorname{Ker}}
\newcommand{\vdiv}{\operatorname{div}}
\newcommand{\bI}{\mathbf{I}}
\newcommand{\bL}{\mathbf{L}}
\newcommand{\bH}{\mathbf{H}}
\newcommand{\jump}[1]{\ensuremath{[\![#1]\!]} }
\newcommand{\avg}[1]{\ensuremath{\left\{\!\!\left\{#1\right\}\!\!\right\}} }
\title{Robust preconditioners
for perturbed saddle-point problems 
and conservative discretizations of Biot's equations utilizing total pressure
\thanks{Submitted to the editors DATE.
\funding{
	WMB acknowledges support from the Dahlquist Research Fellowship, funded by Comsol AB. MK acknowledges support from the Research Council of Norway (NFR) grant no. 280709.
	KAM acknowledges support from the Research Council of Norway, grant 300305 and 
	301013. RRB acknowledges support from the Monash Mathematics Research Fund S05802-3951284.
}}}
\author{Wietse M. Boon\thanks{Department of Mathematics,
KTH Royal Institute of Technology,
Lindstedtsv\"agen 25, 114 28, Stockholm,
Sweden, \email{wietse@kth.se}.} 
\and Miroslav Kuchta\thanks{Simula Research Laboratory, 1325 Lysaker, Norway, 
\email{miroslav@simula.no}.} 
\and Kent-Andre Mardal\thanks{Department of Mathematics, Division of Mechanics, University of Oslo, Norway; and 
 Simula Research Laboratory, 1325 Lysaker, Norway, 
\email{kent-and@math.uio.no}.} 
\and 
Ricardo Ruiz-Baier\thanks{School of Mathematics, 
Monash University, 
9 Rainforest Walk, 
Clayton 3800 VIC,  Australia,  
\email{ricardo.ruizbaier@monash.edu}.}}
\date{\today}
\begin{document}

\maketitle

\begin{abstract}
We develop robust solvers for a class of perturbed saddle-point problems arising in the study of a second-order elliptic equation in mixed form (in terms of flux and potential), and of the four-field formulation of Biot's consolidation problem for linear  poroelasticity (using displacement, filtration flux, total pressure and fluid pressure). The stability of the continuous variational mixed problems, which hinges upon using adequately weighted spaces, is addressed in detail; and the efficacy of the proposed preconditioners, as well as their robustness with respect to relevant  material properties, is demonstrated through several numerical experiments. 
\end{abstract}
\begin{keywords}
Operator preconditioning, Mixed finite element methods, Perturbed saddle-point problems, Equations of linear poroelasticity.
\end{keywords}

\section{Introduction}

Disparity of model parameters is a phenomenon commonly encountered in a variety of applications, and it is of paramount importance that the formulation of multiphysics problems and the design of discretizations and efficient solvers is robust with respect to at least some of the parameters with wide value ranges. We will here consider the equations of linear poroelasticity, where fluid flows in isothermal deformable porous media, assuming that the solid skeleton undergoes small strains. 
The poroelastic Biot equations that form the subject of this study are: 
\begin{subequations}\label{biot0}
\begin{align}
\label{biot1}
	- \bnabla \cdot( 2 \mu \beps(\bu) + (\lambda \nabla \cdot \bu - \alpha p) \bI) &= \bb & \text{in $\Omega\times(0,t_{\text{end}}]$},\\
\label{biot2}
	K^{-1} \bq + \nabla p &= \cero & \text{in $\Omega\times(0,t_{\text{end}}]$},\\
\label{biot3}
	\partial_t(c p + \alpha \nabla \cdot \bu) + \nabla \cdot \bq &= f& \text{in $\Omega\times(0,t_{\text{end}}]$},
\end{align}
\end{subequations}
equipped with suitable boundary (and initial) conditions to be specified later. 
Here, $\bu(t):\Omega\to\mathbb{R}^d$ is the solid displacement vector, $\bq(t):\Omega\to\mathbb{R}^d$ is the Darcy flux or percolation velocity, $p(t):\Omega\to\mathbb{R}$ is the fluid pressure,
the symbol $\partial_t$ denotes the partial derivative with respect to time, $\bb(t):\Omega\to\mathbb{R}^d$ is a 
prescribed body force per unit volume of the porous medium, the symmetric part of the displacement (row-wise) gradient defines the infinitesimal strain tensor  $\beps(\bu) = \frac{1}{2}(\nabla\bu+\nabla\bu^{\tt t})$, the parameters $\lambda,\mu$ are the Lam\'e constants of the solid, $K$ is the hydraulic conductivity (ratio between the material permeability and fluid viscosity), $f(t):\Omega\to\mathbb{R}$ is a  
source or sink of fluid mass, and $c,\alpha$ are the total storage capacity and Biot-Willis poroelastic coefficient, respectively. 

Several types of discretizations for \eqref{biot0} are available from the literature, including mixed and continuous elements, least-squares mixed, stabilized $H$(div)-conforming and other non-conforming schemes, adaptive mixed methods, weak Galerkin, enriched Lagrangian, and hybrid finite-volume finite element methods (see, e.g.,  
\cite{phillips07,berger15, zeng19,li20,rl17,wheeler14,hu2017nonconforming,yi13,sun17,harper20}
and the references therein). 

A main challenge for these equations is the construction of solvers that scale properly for 
nearly incompressible solids where the Lam\'e dilation modulus tends to infinity, as well as in the case of nearly incompressible fluids, for which the constrained specific storage coefficient approaches zero, or the nearly impermeable regime where the hydraulic conductivity is very small. These scenarios entail not only a complication at the practical and implementation level, but also a difficulty inherent to the functional setting of the abstract formulation (see, e.g., \cite{piersanti2020parameter,hong18,yi17}). 
In more detail, for almost incompressible solids ($\lambda\gg\mu$), the primal form of the elasticity equation, used in \eqref{biot1}, here scaled by $\lambda$, 
\[
- \bnabla \cdot( 2 \frac{\mu}{\lambda} \beps(\bu) + (\nabla \cdot \bu) \bI) = \bb \  \text{in} \  \Omega,\\
\]
is known to suffer from locking when using standard elements such as
Lagrange elements. The reason is that the problem is a singular perturbation problem, where stability in $\bH^1(\Omega)$ decays as $\mu/\lambda$ tends to zero and where stability can 
only be obtained in $\bH(\mbox{div}, \Omega)$. 
A remedy is to use elements that are stable in both $\bH(\mbox{div}, \Omega)$ and $\bH^1(\Omega)$ such as in \cite{hong18,hong2019conservative} where stabilized Brezzi-Douglas-Marini (BDM) elements are employed. 
Another alternative is to employ a technique similar to  Herrmann's method~\cite{herrmann1965elasticity} where an additional solid pressure, 
$p=\lambda\nabla\cdot\bu$, is introduced. 
It has been shown that a straightforward application of Herrmann's method is unstable, but that the technique can be adjusted such that the discretization becomes stable~\cite{lee17,oyarzua16}
for displacement-pressure formulations, the so-called total-pressure formulation. 
The method was extended to conservative formulations, i.e. displacement-flux-pressure, in~\cite{kumar20}, but robustness with
respect to all parameters was not established. 

A second singular perturbation problem occurs when the hydraulic conductivity ($K$) tends to zero. 
To prevent
non-physical pressure oscillations, mixed schemes involving both flux and pressure are often used, i.e., 
ignoring for the moment the elastic deformation, the equations
read: 
\begin{align*}
	K^{-1} \bq + \nabla p &= \cero & \text{in $\Omega\times(0,t_{\text{end}}]$},\\
	\partial_t c p  + \nabla \cdot \bq &= f& \text{in $\Omega\times(0,t_{\text{end}}]$}.
\end{align*}
Hence, upon time-discretization, this system is a mixed Darcy problem with a 
lower order perturbation term for the pressure and we 
will consider the cases where one or both of $K\rightarrow 0$ and $c\rightarrow0$
is allowed. It is seen that if the perturbation is sufficiently small (such that the additional term is bounded by the pressure norm), then the perturbed problem is well-posed if a weighted $L^2$-norm, i.e., $K^{\frac12} L^2(\Omega)$, is used 
for the pressure. This observation has been frequently
employed~\cite{hong18, powell2003optimal, vassilevski2013block, vassilevski2014mixed} in various porous media flow applications. 
However, if the hydraulic conductivity ratio $K$ is small, then 
(the fluid part of) the perturbation cannot be bounded by the $K^{\frac12} L^2(\Omega)$-norm, preventing a robust stability result. With this in mind, a convenient rescaling of the employed norms seems to produce better results, as recently suggested in \cite{baerland2018observation}. 

We also mention that for conservative
Biot formulations such as \eqref{biot1}--\eqref{biot3}, the
stability, i.e. the inf-sup condition,  of the porous media problem can be weakened, as observed in~\cite{hu2017nonconforming, mardal2020accurate}. 
We will show, for the total pressure formulation
of the conservative form of Biot's equations \eqref{biot1}--\eqref{biot3}, that the scaling of the fluid pressure cannot be chosen independently of
the coupling to the solid displacement and that
the stability of the fluid pressure in $L^2(\Omega) + K^{\frac12} H^1(\Omega)$ is crucial. 

A key tool for our stability analysis is the seminal 
paper~\cite{braess1996stability} which analyzed saddle-point problems with penalty terms corresponding to singular perturbation problems. Therein, it is shown that 
depending on the penalty term, the perturbation may either stabilize or
de-stabilize the saddle-point problem. The Biot equations
in study here involve two saddle-point problems with 
penalties corresponding to two singular perturbation problems
that may be strongly coupled. The analysis leads us to utilize 
non-standard Sobolev spaces to untangle the precise stability 
problems required in both the continuous and discrete settings.

The paper is structured as follows. 
The motivating problem of Biot consolidation and its variational formulation are presented in the remainder of this section. Then in Section~\ref{sec:abstract} we give an overview of the analysis of perturbed saddle-point problems following \cite{braess1996stability}. In Section~\ref{sec:helmholtz}, this theoretical framework is used to show that a generalized Poisson equation in mixed form with Dirichlet boundary conditions is stable in appropriately weighted norms, and there we also discuss the case of Neumann boundary conditions. Section~\ref{sec:biot} contains an application of the theory to the four-field formulation of Biot equations. In Section~\ref{sec:discrete} we make precise the norms and spaces required at the discrete level, and in Section~\ref{sec:results} we collect numerical results that test the performance of the proposed block preconditioners for the modified Poisson equation and the Biot consolidation system.

\subsection{Problem formulation}
Let us consider the time domain $t\in(0,t_{\mathrm{end}}]$ and an open, bounded connected Lipschitz spatial domain $\Omega\subset \mathbb{R}^d$, $d=2,3$ on which 
 the Biot equations in quasi-static form, \eqref{biot0},  are posed.

We introduce the total pressure (the sum of the volumetric contributions to the poroelastic Cauchy stress, cf. \cite{lee17,oyarzua16}) as
\begin{align}
	p_T &= \lambda \nabla \cdot \bu - \alpha p. \label{eq: def p0}
\end{align}
We substitute \eqref{eq: def p0} in the momentum balance equation and use it
to rewrite the volumetric term in the mass balance equation:
\begin{align*}
	\alpha \nabla \cdot \bu = \frac{\alpha}{\lambda} p_T + \frac{\alpha^2}{\lambda} p.
\end{align*}

This leads to the four-field formulation of Biot's equation (see, e.g., \cite{hong18,kumar20}) written in operator form 
\begin{align*}
	\begin{bmatrix}
		- \bnabla \cdot (2\mu \beps) & & - \nabla & \\
		& K^{-1} & & \nabla \\
		-\nabla \cdot & & \frac{1}{\lambda}  & \frac{\alpha}{\lambda} \\
		& \nabla \cdot & \frac{\alpha}{\lambda} \partial_t & (c + \frac{\alpha^2}{\lambda}) \partial_t
	\end{bmatrix}
	\begin{bmatrix}
		\bu \\ \bq \\ p_T \\ p
	\end{bmatrix}
	=
	\begin{bmatrix}
		\bb \\ \cero \\ 0 \\ f
	\end{bmatrix}.
\end{align*}

Regarding boundary conditions, we assume that the boundary $\partial\Omega = \Gamma^{\bu}\cup \Gamma^{\bsigma}$ with $\Gamma^{\bu}\cap \Gamma^{\bsigma} = \emptyset$ and $|\Gamma^{\bu}|\neq 0 \neq |\Gamma^{\bsigma}|$, splits in two sub-regions: 
$\Gamma^{\bu}$ where displacement and normal filtration flux are prescribed (the solid is clamped and the fluid slips), and $\Gamma^{\bsigma}$ where we set zero total traction and 
zero fluid pressure
\begin{subequations} \label{eqs: bdry and init conditions}
\begin{align}\label{eq:bc1}
\bu  = \cero \quad \text{and} \quad \bnu\cdot\bq & = 0 & \text{ on $\Gamma^{\bu}\times(0,t_{\text{end}}]$},\\
\label{eq:bc2}
[2\mu\beps(\bu)-p_T\bI]\bnu  = \cero \quad \text{and} \quad 
p & = 0 & \text{ on $\Gamma^{\bsigma}\times(0,t_{\text{end}}]$},
\end{align}
where $\bnu$ is the unit normal 
vector on the boundary $\partial\Omega$. 
We also suppose that the system is initially at rest 
\begin{equation}\label{eq:initial}
\bu = \boldsymbol{0}, \quad p=0, \quad \text{in $\Omega\times\{0\}$.}
\end{equation}
\end{subequations}

In the time-discrete setting, let $\tau$ be the time-step, 
let $\bq_\tau := \tau \bq$,  and 
group the displacement and flux unknowns into a vector $\vec{\bu}$, and the total pressure and fluid pressure into $\vec{p}$ so
that the vector of unknowns (at the current time step)
is $(\vec\bu,\vec{p})^{\tt t} = (\bu,\bq_\tau,p_T,p)^{\tt t}$.  
After a rescaling of the equations similar to \cite{hong18}, we have the operator:
\begin{align}\label{eq:biot}
	\AA \begin{bmatrix}\vec\bu\\ \vec{p}\end{bmatrix} &:= 
	\begin{bmatrix}
		- \bnabla \cdot (2\mu \beps) & & - \nabla & \\
		& (\tau K)^{-1} & & \nabla \\
		-\nabla \cdot & & \frac{1}{\lambda}  & \frac{\alpha}{\lambda} \\
		& \nabla \cdot & \frac{\alpha}{\lambda} & c + \frac{\alpha^2}{\lambda}
	\end{bmatrix}
	\begin{bmatrix}
		\bu \\ \bq_\tau \\ p_T \\ p
	\end{bmatrix}.
\end{align}

Note that from the time-discrete formulation \eqref{eq:biot} and from the setup of boundary conditions \eqref{eq:bc1}-\eqref{eq:bc2}, the natural trial and test spaces (before scaling) for displacement, filtration flux, total pressure, and fluid pressure, are respectively 
$$ \bH_{\Gamma^{\bu}}^1(\Omega), \quad 
\bH_{\Gamma^{\bu}}(\vdiv,\Omega), \quad 
L^2(\Omega), \quad L^2(\Omega).$$

Note also that system \eqref{eq:biot} adopts the structure:
\begin{align}\label{eq:ABBC}
	\begin{bmatrix}
		A & -B^{\tt t} \\
		B & C
	\end{bmatrix}
	\begin{bmatrix}
		\vec\bu \\ \vec{p}
	\end{bmatrix}
	=
	\begin{bmatrix}
		f \\ g
	\end{bmatrix},
\end{align}
with $A$ and $C$ symmetric, positive semi-definite operators, and where the right-hand side vectors $f,g$ contain contributions from the body load and volumetric source, as well as from quantities in the previous time step that arise from the discretization in time. More precisely, we have the weak formulation: Find $(\vec{\bu},\vec p)\in \bigl(\bH_{\Gamma^{\bu}}^1(\Omega)\times 
\bH_{\Gamma^{\bu}}(\vdiv,\Omega)\bigr)\times  
\bigl(L^2(\Omega) \times L^2(\Omega)\bigr)$ such that 
\begin{align*}
    a(\vec{\bu},\vec{\bv})  +b(\vec{\bv},\vec{p}) & = F(\vec{\bv}) & 
    \forall \vec{\bv} &\in \bH_{\Gamma^{\bu}}^1(\Omega)\times 
\bH_{\Gamma^{\bu}}(\vdiv,\Omega),\\
    b(\vec{\bu},\vec{q}) - c(\vec{p},\vec{q})& = -G(\vec{q}) & 
    \forall \vec{q} &\in L^2(\Omega) \times L^2(\Omega),
\end{align*}
where $\vec{\bv} = (\bv, \bzeta,q_T,q)$, and the bilinear forms and functionals adopt the form 
\begin{align*}
a(\vec{\bu},\vec{\bv})& := 2\mu \int_\Omega \beps(\bu):\beps(\bv) +\frac{1}{\tau K} \int_\Omega \bq_\tau \cdot \bzeta,\\
b(\vec{\bv},\vec{q}) &:= \int_\Omega \nabla\cdot\bv\, q_T - \int_\Omega \nabla\cdot\bzeta\, q,\\
c(\vec{p},\vec{q}) &: = \frac{1}{\lambda}\int_\Omega p_T q_T + \frac{\alpha}{\lambda}\int_\Omega p\,q_T + \frac{\alpha}{\lambda}\int_\Omega p_T\,q +  \bigl[c+\frac{\alpha^2}{\lambda}\bigr]\int_\Omega p\,q,\\
G(\vec{q}) &:= \int_\Omega\biggl( \tau f + \bigl[c+\frac{\alpha^2}{\lambda}\bigr] p^n + \frac{\alpha}{\lambda}p_T^n\biggr) q, \qquad 
F(\vec{\bv}) : = \int_\Omega \bb\cdot\bv,
\end{align*}
where $p^n,p_T^n$ denote the approximations of fluid and total pressure on the previous iteration of backward Euler's method. 

Let us point out that using $(\vec\bu,\vec{p})^{\tt t}$ as a test function, we are led to the following \emph{poroelastic energy norm}:
\begin{align} \label{eq: generated semi-norms}
	\langle \AA (\vec\bu,\vec{p}), (\vec\bu,\vec{p}) \rangle
	= 2 \mu \| \beps(\bu) \|_{0,\Omega}^2 
	+ (\tau K)^{-1} \| \bq_\tau \|^2_{0,\Omega} 
	+ \frac{1}{\lambda} \| p_T + \alpha p \|_{0,\Omega}^2 
	+ c \| p \|_{0,\Omega}^2.
\end{align}
However, an issue with writing a global 
multilinear form and trying to analyze its 
stability is that this naturally induced  
semi-norm does not take into account the term $\nabla \cdot \bq$ and therefore one loses separate control over $p_T$ and $p$ if $c = 0$. In particular, the operators $A$ and $C$ do not possess sufficient coercivity to ensure this (see, e.g, \cite{boffi2013mixed,Gatica2014}). We thus require a more involved strategy in order to obtain a stability bound in a stronger norm than \eqref{eq: generated semi-norms}. This will be presented in Section~\ref{sec:biot}, for which we first need to discuss theoretical aspects of perturbed saddle-point systems, which we exemplify with a simpler problem. 

We also point out that the parabolic-elliptic nature of the coupled system may suggest, as an alternative to the monolithic approach leading to \eqref{eq:biot}, to use operator splitting techniques that allow to solve smaller and better conditioned systems in an iterative manner, as studied in, e.g., \cite{ahmed20,both17,mikelic14}; however we do not address those lines here.

\section{Abstract analysis of perturbed saddle-point problems}\label{sec:abstract}
Typically, the stability analysis of perturbed saddle-point problems of type \eqref{eq:ABBC}, posed on $V \times Q$, assumes a given norm on the space $Q$ and uses this norm in its assumptions on the bilinear forms. However, we obtain two different types of control of the solution $p \in Q$, through the operators $B$ and $C$, respectively. It is essential in the context of robust preconditioning to understand these two effects so that the dependencies on model parameters can be properly captured. This section therefore presents an analysis of \eqref{eq:ABBC} with the use of two different (semi-)norms, reflecting the roles that $B$ and $C$ play. 
For this, we rely on the analysis presented in \cite{braess1996stability}.

We start by introducing notation. Let $V$ and $Q_b$ be two Hilbert spaces endowed with norms $\| \cdot \|_V$ and $\| \cdot \|_b$ that are possibly parameter-dependent. Let $Q$ be a dense linear subspace of $Q_b$. We have three bilinear forms $a: V \times V \to \mathbb{R}$, $b: V \times Q_b \to \mathbb{R}$, and $c: Q \times Q \to \mathbb{R}$, of which we assume that $a$ and $b$ are continuous and that $a$ and $c$ are symmetric and positive (semi-)definite, i.e.
\begin{align*}
	a(u, v) &\lesssim \| u \|_V \| v \|_V, & 
	b(u, q) &\lesssim \| u \|_V \| q \|_b, & \forall u, v &\in V, \ \forall q \in Q \\
 	a(u, v) &= a(v, u), & a(v, v) &\ge 0, & \forall u, v &\in V, \\
 	c(p, q) &= c(q, p), & c(q, q) &\ge 0, & \forall p, q &\in Q.
\end{align*} 
Here, we use the notation $x \lesssim y$ to denote that a constant $c_0 > 0$ exists, independent of model parameters such that $x \le c_0 y$. The relation $\gtrsim$ has analogous meaning and we denote $x \eqsim y$ if $x \lesssim y \lesssim x$.

Let $c$ generate the (semi-)norm
\[
	| p |_c^2 := c(p, p), \qquad \forall p \in Q,
\]
and we assume that $Q$ is a complete space endowed with the norm 
$\| p \|_Q^2 := \| p \|_b^2 + | p |_c^2$.

The linear operators associated to $a(\cdot, \cdot)$, $b(\cdot, \cdot)$, and $c(\cdot, \cdot)$ are denoted by $A: V \to V'$, $B: V \to Q_b'$, and $C: Q \to Q'$, respectively. Letting $0 \le t \le 1$ be a scaling parameter, we consider the following problem: \\
Find $(u, p) \in V \times Q_b$ such that
\begin{align}\label{eq:ABBtC}
	\begin{bmatrix}
		A & -B^{\tt t} \\
		B & t^2C
	\end{bmatrix}
	\begin{bmatrix}
		u \\ p
	\end{bmatrix}
	=
	\begin{bmatrix}
		f \\ g
	\end{bmatrix},
\end{align}

We assume that the following bounds, known as the Brezzi conditions, hold:
\begin{subequations} \label{eqs: Brezzi}
	\begin{align}
		\forall v &\in \Ker B, & 
		a(v, v) &\gtrsim \| v \|_V^2,  \label{eq: coercivity A}\\
		\forall p &\in Q_b, &
		\sup_{v \in V} \frac{b(v, p)}{\| v \|_V} &\gtrsim \| p \|_b.  \label{eq: inf-sup B}
	\end{align}
\end{subequations}
Note the use of the norm $\| \cdot \|_b$ on $p$ in \eqref{eq: inf-sup B}. This distinguishes our analysis from the convention in which the norm $\| \cdot \|_Q$ is used in \eqref{eq: inf-sup B} instead, see e.g. \cite{boffi2013mixed,toselli2006domain}.

Finally, we introduce the parameter-dependent energy norm 
\begin{align} \label{eq: energy norm}
	\enorm{(v, q)}^2 
	:= \| v \|_V^2 + \| q \|_b^2 + t^2 | q |_c^2.
\end{align}

Note that $t$ in this norm reflects the additional stability obtained from the $C$-block. For $t = 0$, we obtain stability directly from the Brezzi conditions. However, for the range $t \in [0, 1]$, we require an additional inf-sup condition, as presented in the following theorem.

\begin{theorem}[Brezzi-Braess] \label{thm: Braess}
	Let the bilinear forms $a$ and $b$ satisfy the Brezzi conditions \eqref{eqs: Brezzi}. If, moreover,
	\begin{align}
		\forall u &\in V, & 
		\sup_{(v, q) \in V \times Q} \frac{a(u, v) + b(u, q)}{\enorm{(v, q)}} &\gtrsim \| u \|_V,  \label{eq: Braess inf-sup}
	\end{align}
	then problem \eqref{eq:ABBC} is stable in the energy norm \eqref{eq: energy norm}.
\end{theorem}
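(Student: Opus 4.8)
The plan is to deduce stability from a single global inf-sup (Babuška--Nečas) estimate for the bilinear form of the whole system,
\[
  \mathcal{B}\bigl((u,p),(v,q)\bigr) := a(u,v) - b(v,p) + b(u,q) + t^2 c(p,q),
\]
measured in the energy norm \eqref{eq: energy norm}; stability of \eqref{eq:ABBtC} uniformly in $t\in[0,1]$ then yields \eqref{eq:ABBC} at $t=1$. Since $a$ and $c$ are symmetric, $\mathcal{B}$ decomposes into a symmetric part $a(u,v)+t^2c(p,q)$ and a skew pairing $-b(v,p)+b(u,q)$; the map that flips the sign of both pressure arguments is an energy isometry conjugating $\mathcal{B}$ to its adjoint, so the transposed inf-sup follows from the forward one. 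It therefore suffices to exhibit, for each $(u,p)\in V\times Q_b$, a test pair $(v,q)$ with $\enorm{(v,q)} \lesssim \enorm{(u,p)}$ and $\mathcal{B}((u,p),(v,q)) \gtrsim \enorm{(u,p)}^2$.

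First I would assemble $(v,q)$ as a weighted combination of three supremizers, each responsible for one term of $\enorm{(u,p)}^2 = \|u\|_V^2 + \|p\|_b^2 + t^2|p|_c^2$. The diagonal choice $(u,p)$ gives $a(u,u)+t^2|p|_c^2 \ge t^2|p|_c^2$ by positivity of $a$, controlling the $c$-seminorm with no harmful coupling. Condition \eqref{eq: inf-sup B} supplies $v_p\in V$ with $\|v_p\|_V \eqsim \|p\|_b$ and $b(v_p,p) \gtrsim \|p\|_b^2$, so that testing with $(-v_p,0)$ recovers $\|p\|_b$ at the price of the coupling $-a(u,v_p)$, bounded by $C_a\|u\|_V\|p\|_b$. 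Condition \eqref{eq: Braess inf-sup} supplies a pair $(v_u,q_u)$ with $\enorm{(v_u,q_u)} \eqsim \|u\|_V$ and $a(u,v_u)+b(u,q_u) \gtrsim \|u\|_V^2$, so that testing with $(v_u,q_u)$ recovers $\|u\|_V$ at the price of the couplings $-b(v_u,p)$ and $t^2 c(p,q_u)$.

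The decisive point I would stress is that the energy bound $\enorm{(v_u,q_u)} \eqsim \|u\|_V$ controls \emph{simultaneously} $\|v_u\|_V \lesssim \|u\|_V$ and $t|q_u|_c \lesssim \|u\|_V$. Hence the penalty coupling is tamed as $t^2 c(p,q_u) \le (t|p|_c)(t|q_u|_c) \lesssim (t|p|_c)\,\|u\|_V$, a contribution that stays bounded for all $t\in[0,1]$. This is exactly what the classical Brezzi inf-sup for $B$ cannot provide: its supremizer is controlled only in $\|\cdot\|_b$ and its $c$-seminorm is arbitrary, so for $t>0$ the analogous penalty term would be unbounded. This is why \eqref{eq: Braess inf-sup} must be posed in the energy norm, and why for $t=0$, where $t^2 c(p,q_u)$ vanishes, the Brezzi conditions alone already suffice, in agreement with the remark preceding the theorem.

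The main obstacle is then the choice of the relative weights so that, after estimating every cross term through continuity of $a$, $b$ and the Cauchy--Schwarz inequality for $c$, the residual quadratic form in $\|u\|_V$, $\|p\|_b$ and $t|p|_c$ is positive definite. The difficulty is genuinely two-sided: the displacement supremizer feeds back into $\|p\|_b$ through $-b(v_u,p)$, while the pressure supremizer feeds back into $\|u\|_V$ through $a(u,v_p)$, and both feedbacks, together with $(t|p|_c)\|u\|_V$, must be absorbed at once. I expect to take the displacement supremizer as the dominant contribution, the pressure supremizer with a small weight so its feedback enters only quadratically in that weight, and the diagonal term with a large weight to swallow $(t|p|_c)\|u\|_V$. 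Should the direct weighting prove too lossy, the natural refinement is to anchor the argument on the splitting $u=u_0+u_1$ with $u_0\in\Ker B$ and $u_1\in(\Ker B)^\perp$, controlling $u_0$ by the coercivity \eqref{eq: coercivity A} (which, being diagonal, produces no pressure coupling) and reserving \eqref{eq: Braess inf-sup} to upgrade the control of $u_1$ to the energy norm. In all cases the resulting inf-sup constant depends only on the continuity and inf-sup constants of the hypotheses, hence not on the model parameters, which is the robustness we are after.
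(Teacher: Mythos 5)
The paper does not actually prove this theorem; it cites Lemma~3 of Braess (1996), so the comparison is with that reference. Your overall strategy is essentially a reconstruction of Braess's argument and is the right one: reduce stability to a single Babu\v{s}ka inf-sup estimate for the global form $\mathcal{B}$ in the energy norm, obtain the transposed estimate from the sign-flip isometry $(u,p)\mapsto(u,-p)$, and build the test pair as a weighted combination of the diagonal element $(u,p)$, the $B$-supremizer $(-v_p,0)$, and the Braess supremizer $(v_u,q_u)$. Your identification of the decisive point is also correct: condition \eqref{eq: Braess inf-sup} controls $t|q_u|_c\lesssim\|u\|_V$ \emph{simultaneously} with $\|v_u\|_V$, which is what bounds $t^2c(p,q_u)$ uniformly in $t$ and is exactly what \eqref{eq: inf-sup B} alone cannot deliver.

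There is, however, a concrete gap in the step you yourself flag as the obstacle. You estimate the feedback of the pressure supremizer by continuity, $|a(u,v_p)|\le C_a\|u\|_V\|p\|_b$, and propose to absorb it by giving that supremizer a small weight $\delta_2$. Writing $x=\|u\|_V$, $y=\|p\|_b$, $z=t|p|_c$, after Young's inequality the $y^2$-coefficient of the resulting quadratic form is of the type $\delta_2\beta-\delta_2^2C_a^2/(4\epsilon)-C_b^2/(4\epsilon')$ (the last term coming from $-b(v_u,p)$), and with $\epsilon+\epsilon'<1$ this can be made positive only under a relation such as $\beta> C_aC_b$ between the inf-sup and continuity constants, which is not available; so the weighting as described does not close. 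The missing idea is the Cauchy--Schwarz inequality for the symmetric positive semi-definite form $a$: estimate $|a(u,v_p)|\le a(u,u)^{1/2}a(v_p,v_p)^{1/2}\lesssim a(u,u)^{1/2}\|p\|_b$ and absorb the $a(u,u)^{1/2}$ factor into the diagonal contribution $\delta_1\,a(u,u)$, whose weight $\delta_1$ may be taken arbitrarily large at no cost to positivity (it only enters the bound on $\enorm{(v,q)}$). With this, choosing the Braess supremizer with weight one, $\delta_1$ large enough, and $\delta_2\eqsim\delta_1$ renders the quadratic form in $(x,y,z)$ positive definite with constants depending only on those in the hypotheses; this is precisely the device in Braess's proof. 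Note that your fallback via the splitting $u=u_0+u_1$ with $u_0\in\Ker B$ relies on the same semi-definite Cauchy--Schwarz for $a$ (to relate $a(u_0,u_0)$ to $a(u,u)$ and $\|u_1\|_V$), so it does not circumvent the issue.
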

\begin{proof}
	See \cite[Lemma 3]{braess1996stability}.
\end{proof}

In the following, we refer to \eqref{eq: Braess inf-sup} as the \emph{Braess} condition. We will now demonstrate the use of this result for an exemplary problem concerning the modified Poisson equation, followed by the four-field formulation of Biot's equations \eqref{eq:biot}.

\section{A generalized Poisson (or simplified Helmholtz) equation in mixed form}\label{sec:helmholtz}

\subsection{Dirichlet boundary conditions}\label{sec:helmholtz_dirichlet}
Let us consider the following elliptic problem (here referred to as generalized Poisson equation, or also \emph{modified/simplified} Helmholtz equation because the squared wavenumber is taken with the opposite sign):
\begin{equation}\label{eq:helmholtz_dirichlet_strong}
\begin{aligned}
	-\nabla\cdot(K\nabla p)+\alpha p &= f \qquad 
	\text{in }\Omega, &
	p &= 0 \qquad
	\text{on }\partial \Omega,
\end{aligned}
\end{equation}
with coefficient matrix $K$, prescribed right-hand side $f$, and scalar parameter $0 \le \alpha \le 1$ (the squared wavenumber); 
and its  mixed formulation in operator form, given by: \\
Find $\bu \in V$ and $p \in Q$ such that
\begin{equation}\label{eq:Helmholtz}
\begin{bmatrix}
K^{-1}I & \nabla\\
\nabla\cdot & \alpha I\\
\end{bmatrix}\begin{bmatrix}
  \bu \\
  p
\end{bmatrix}=\begin{bmatrix}
0\\
f
\end{bmatrix}.
\end{equation}

Note that this problem has the structure of \eqref{eq:ABBC} (see similar mixed and mixed-hybrid formulations using Raviart-Thomas elements in, e.g., \cite{chaumon19,monk10}). We now define the appropriate function spaces and energy norm \eqref{eq: energy norm} using the properties of the operators $A$, $B$ and $C$. 

Starting with the Brezzi conditions \eqref{eqs: Brezzi}, we follow the theory presented in \cite{baerland2018observation} and consider the spaces
\begin{align}
	V &:= K^{-\frac12}\bL^2(\Omega) \cap \bH(\vdiv,\Omega), &
	Q_b &:= K^\frac12 H^1(\Omega) + L^2(\Omega).
\end{align}
These intersection and summation spaces are defined by the parameter-dependent norms
\begin{subequations}
\begin{align}
	\| \bu \|_V^2 &:= \| K^{-\frac12} \bu \|_{0,\Omega}^2 + \| \nabla \cdot \bu \|_{0,\Omega}^2, \label{eq:helmholtz-vnorm} \\
	\| p \|_b^2 &:= \inf_{r \in K^{\frac12}H^1(\Omega)} \biggl( \| p - r \|_{0,\Omega}^2 + \| K^\frac12 \nabla r \|_{0,\Omega}^2 \biggr).
\end{align}\end{subequations}
As shown in \cite{baerland2018observation}, both the inf-sup \eqref{eq: inf-sup B} and coercivity \eqref{eq: coercivity A} conditions hold in these norms. For more information on summation spaces, we refer the reader to \cite{bergh2012interpolation}. 

Letting $\alpha$ play the role of $t^2$ from Section~\ref{sec:abstract}, we have $C = I$ and thus
\begin{align*}
	\| p \|_c^2 &:= 
	c(p, p)
	= \| p \|_{0,\Omega}^2,
\end{align*}
and we remark that $\|p\|_c \not\lesssim \|p\|_b$.
However, we have $Q := L^2(\Omega) \cap Q_b = L^2(\Omega)$ and it remains to show that $Q$ is dense in $Q_b$. But this is immediate from the fact that $Q^{\perp_{Q_b}} = \{ 0\}$.

\begin{lemma} \label{lem: Helmholtz well-posed}
	Given the spaces $V$ and $Q$, their associated norms, and the bilinear forms in \eqref{eq:Helmholtz}, then the assumptions of Theorem~\ref{thm: Braess} hold. In turn, the problem is stable in the energy norm \eqref{eq: energy norm}.
\end{lemma}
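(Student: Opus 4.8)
The Brezzi conditions \eqref{eqs: Brezzi} are already in hand for the norms $\|\cdot\|_V$ and $\|\cdot\|_b$ introduced above, as recalled from the analysis of \cite{baerland2018observation}. The plan is therefore to concentrate entirely on the one remaining hypothesis of Theorem~\ref{thm: Braess}, namely the Braess condition \eqref{eq: Braess inf-sup}. For problem \eqref{eq:Helmholtz} the relevant forms are $a(\bu,\bv)=\int_\Omega K^{-1}\bu\cdot\bv$ and $b(\bu,q)=\int_\Omega(\vdiv\bu)\,q$, with $C=I$ so that $|q|_c^2=\|q\|_{0,\Omega}^2$ and $\alpha$ plays the role of $t^2$.

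To verify \eqref{eq: Braess inf-sup}, I would bound the supremum from below by evaluating the quotient at a single, well-chosen test pair. Given $\bu\in V$, the natural candidate is $(\bv,q)=(\bu,\vdiv\bu)$, which is admissible since $\bu\in\bH(\vdiv,\Omega)$ forces $\vdiv\bu\in L^2(\Omega)=Q$. With this choice the numerator collapses exactly to the squared $V$-norm,
\[
	a(\bu,\bu)+b(\bu,\vdiv\bu)=\|K^{-\frac12}\bu\|_{0,\Omega}^2+\|\vdiv\bu\|_{0,\Omega}^2=\|\bu\|_V^2 ,
\]
so that everything reduces to controlling the energy norm in the denominator.

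The crux of the argument, and the only estimate that genuinely uses the structure of the norms, is showing $\enorm{(\bu,\vdiv\bu)}\lesssim\|\bu\|_V$. Expanding \eqref{eq: energy norm} gives $\enorm{(\bu,\vdiv\bu)}^2=\|\bu\|_V^2+\|\vdiv\bu\|_b^2+\alpha\|\vdiv\bu\|_{0,\Omega}^2$. Here I would invoke two parameter-robust facts: first, taking $r=0$ in the infimum defining $\|\cdot\|_b$ yields $\|\vdiv\bu\|_b\le\|\vdiv\bu\|_{0,\Omega}$; second, $\alpha\le 1$ gives $\alpha\|\vdiv\bu\|_{0,\Omega}^2\le\|\vdiv\bu\|_{0,\Omega}^2$. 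Both extra terms are thus dominated by $\|\vdiv\bu\|_{0,\Omega}^2\le\|\bu\|_V^2$, whence $\enorm{(\bu,\vdiv\bu)}^2\le 3\,\|\bu\|_V^2$.

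Combining these steps, the quotient at $(\bu,\vdiv\bu)$ is bounded below by $\|\bu\|_V^2/(\sqrt3\,\|\bu\|_V)\gtrsim\|\bu\|_V$, which establishes \eqref{eq: Braess inf-sup}. With all assumptions of Theorem~\ref{thm: Braess} in place, the claimed stability in the energy norm follows at once. The delicate point is precisely the denominator bound: because $\|\vdiv\bu\|_{0,\Omega}$ already appears inside $\|\bu\|_V$, and because the weaker norm $\|\cdot\|_b$ (rather than the full $\|\cdot\|_Q$) suffices to measure $q$, the resulting constant is independent of both $K$ and $\alpha$, which is exactly the robustness sought.
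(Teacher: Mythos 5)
Your proposal is correct and follows essentially the same route as the paper: cite \cite{baerland2018observation} for the Brezzi conditions, then verify the Braess condition with the test pair $(\bv,q)=(\bu,\nabla\cdot\bu)$ and bound the energy norm by $\|\bu\|_V$. The only difference is that you spell out why $\|\nabla\cdot\bu\|_b\le\|\nabla\cdot\bu\|_{0,\Omega}$ (taking $r=0$ in the infimum) and where $\alpha\le 1$ enters, details the paper leaves implicit.
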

\begin{proof}
	The validity of the Brezzi conditions \eqref{eqs: Brezzi} is shown in \cite{baerland2018observation}. Hence, it remains to show the Braess condition \eqref{eq: Braess inf-sup} and we proceed as follows. Given $\bu \in V$, let $\bv = \bu \in V$ and $q = \nabla \cdot \bu \in Q$. It follows that
	\begin{align*}
		a(\bu, \bv) + b(\bu, q)
		&= \| K^{-\frac12} \bu \|_{0,\Omega}^2 + \| \nabla \cdot \bu \|_{0,\Omega}^2
		= \| \bu \|_V^2.
	\end{align*}
	Moreover, we have
	\begin{align*}
		\enorm{(\bv, q)}^2
		:= \| \bv \|_V^2 + \| q \|_b^2 + \alpha \| q \|_c^2
		\lesssim \| \bu \|_V^2 + \| \nabla \cdot \bu \|_{0, \Omega}^2
		\lesssim \| \bu \|_V^2.
	\end{align*}
\end{proof}

\begin{remark}\label{rmrk:vv}
	For scalar $K$, it seems natural to consider the norms $\| \bv \|_V^2 := \| K^{-\frac12} \bv \|_{0, \Omega}^2 + \| \nabla \cdot K^{-\frac12} \bv \|_{0, \Omega}^2$ and $\| q \|_b := \| K^{\frac12} q \|_{0, \Omega}$ instead, similar to \cite{vassilevski2013block}. Following the proof of Lemma~\ref{lem: Helmholtz well-posed}, we would then choose $\bv = \bu$ and $q = - \nabla \cdot K^{-1} \bu$, such that 
	\begin{align}
		a(\bu, \bv) - b(\bu, q)
		&= \| K^{-\frac12} \bu \|_{0,\Omega}^2 + \| \nabla \cdot K^{-\frac12}\bu \|_{0,\Omega}^2
		= \| \bu \|_V^2.
	\end{align}
	However, for the second bound, we obtain
	\begin{align}\label{eq:aux00}
		\alpha \| q \|_c^2
		= \alpha \| \nabla \cdot K^{-1} \bu \|_0^2 
		\le \alpha K^{-1} \| \bu \|_V^2.
	\end{align}
	For the case $\alpha > 0$, the bound \eqref{eq:aux00} can not be improved with a constant independent of $K$. This explains the suboptimal performance of the preconditioner $\BB_{\text{VV}}$ for $K < \alpha$ observed in Table~\ref{tab:issues_vv} of Section \ref{sec:results}.
\end{remark}

\begin{remark}
	Strictly speaking, the bilinear form $c(\cdot, \cdot)$ is not continuous in $Q_b \times Q_b$ since the continuity constant would necessarily depend on $K$. For this reason, Theorem~\ref{thm: Braess} is more appropriate than the analysis presented in e.g. \cite{boffi2013mixed,toselli2006domain}, where continuity of $c$ is assumed.
\end{remark}


We remark that the energy norm is given by
\begin{align}
	\enorm{(\bu, p)}^2 &:= \| \bu \|_V^2 + \| p \|_b^2 + \alpha \| p \|_c^2 \nonumber\\
	&\eqsim
	\| \bu \|_V^2
	+
	\inf_{r \in K^{\frac12}H^1(\Omega)}
	\biggl( (1 + \alpha) \| p - r \|_{0,\Omega}^2 + 
	\alpha \| r \|_{0,\Omega}^2 + 
	\| K^\frac12 \nabla r \|_{0,\Omega}^2 \biggr).
\end{align}

Based on this norm, we use the theory from \cite{mardal-winther} to propose the following preconditioner for problem \eqref{eq:Helmholtz}:
\begin{equation}\label{eq:darcy_B}
\BB =
\begin{bmatrix}
  \left(K^{-1}I - \nabla\nabla\cdot\right)^{-1} & 0\\
  0 & \left((1 + \alpha) I\right)^{-1} + \left(\alpha I - K\Delta\right)^{-1}
\end{bmatrix}.
\end{equation}

\subsection{Neumann boundary conditions}\label{sec:helmholtz_neumann}

Consider the following generalized Poi\-sson problem with homogeneous Neumann boundary conditions for $0 \le \alpha \le 1$:
\begin{align} \label{eq: Helmholtz Neumann strong}
	\nabla \cdot (- \nabla p) + \alpha p &= f, &
	- \bnu \cdot \nabla p|_{\partial \Omega} &=0.
\end{align}
In the limit case of $\alpha = 0$, the solution $p$ is only defined for compatible $f$. We thus restrict this section to the case where $f$ has zero mean, i.e.
\[
	\bar{f} = \Pi_\mathbb{R} f = 0,
\]
with $\Pi_\mathbb{R}$ the projection onto constants. Applying this projection to the original equation, we immediately obtain that $\alpha \bar{p} = 0$. In the limit case, we have the freedom to choose $p$ with zero mean so this implies that $\bar{p} = 0$ for all $\alpha \ge 0$. This property is usually treated by searching the solution in the restricted function space $L^2(\Omega) / \mathbb{R}$. However, this can be cumbersome to discretize so we present an alternative approach, based on the observations from Section~\ref{sec:abstract}.

Let us consider the following, equivalent problem:\\ 
Find $\bu \in \bH_0(\vdiv,\Omega)$ and $p \in L^2(\Omega)$ such that
\begin{equation}\label{eq:Neumann mixed}
\begin{bmatrix}
I & \nabla\\
\nabla\cdot & \alpha I + (1 - \alpha) \Pi_\mathbb{R} \\
\end{bmatrix}\begin{bmatrix}
  \bu \\
  p
\end{bmatrix}=\begin{bmatrix}
0\\
f
\end{bmatrix}.
\end{equation}


\begin{lemma} \label{lem: Neumann well-posed}
	The solution $(\bu, p)$ to \eqref{eq:Neumann mixed} exists uniquely and satisfies
	\[
		\| \bu \|_{\vdiv,\Omega} + (1 + \sqrt{\alpha}) \| p \|_{0, \Omega} \lesssim \| f \|_{Q'}.
	\]
\end{lemma}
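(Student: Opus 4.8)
The plan is to recognize \eqref{eq:Neumann mixed} as an instance of the abstract problem of Theorem~\ref{thm: Braess} with $K=I$, so that the argument parallels Lemma~\ref{lem: Helmholtz well-posed}, the only new feature being the rank-one term $(1-\alpha)\Pi_{\mathbb{R}}$ and the space $V=\bH_0(\vdiv,\Omega)$. Here $a(\bu,\bv)=\int_\Omega\bu\cdot\bv$, $b(\bv,q)=\int_\Omega(\vdiv\bv)\,q$, and $c(p,q)=\alpha\int_\Omega pq+(1-\alpha)\int_\Omega(\Pi_{\mathbb{R}}p)\,q$, with $\alpha$ in the role of $t^2$; note that $a(\bv,\bv)=\|\bv\|_{0,\Omega}^2$, which equals $\|\bv\|_{\vdiv,\Omega}^2$ on divergence-free $\bv$.

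The conceptually decisive step is to use the $(1-\alpha)\Pi_{\mathbb{R}}$ term to fix the mean of $p$. Testing the second equation against a constant and using $\overline{\vdiv\,\bu}=0$ for $\bu\in\bH_0(\vdiv,\Omega)$ together with $\bar f=0$, the $\alpha$ and $1-\alpha$ contributions combine to $|\Omega|\,\bar p=0$, hence $\bar p=0$ for every $\alpha\ge 0$. Because $\vdiv\bv$ is mean-free, the constant mode also drops out of $b$, so the constant component decouples completely and the problem reduces to a perturbed saddle point on $V\times L^2_0(\Omega)$ whose lower-right block is exactly $\alpha I$, since $\Pi_{\mathbb{R}}$ annihilates mean-free functions. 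On this reduced space we are precisely in the setting $C=I$, $t^2=\alpha$, $\|q\|_b=\|q\|_{0,\Omega}$.

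I would then check the hypotheses of Theorem~\ref{thm: Braess}. Coercivity \eqref{eq: coercivity A} on $\Ker B=\{\bv:\vdiv\bv=0\}$ is immediate, and the Braess condition \eqref{eq: Braess inf-sup} follows exactly as in Lemma~\ref{lem: Helmholtz well-posed} by taking $\bv=\bu$ and $q=\vdiv\bu\in L^2_0(\Omega)$, so that $a(\bu,\bu)+b(\bu,q)=\|\bu\|_{\vdiv,\Omega}^2$ while $\enorm{(\bu,q)}^2=\|\bu\|_{\vdiv,\Omega}^2+(1+\alpha)\|\vdiv\bu\|_{0,\Omega}^2\lesssim\|\bu\|_{\vdiv,\Omega}^2$. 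The one genuinely technical ingredient is the inf-sup bound \eqref{eq: inf-sup B}, namely that $\vdiv:\bH_0(\vdiv,\Omega)\to L^2_0(\Omega)$ has a bounded right inverse: for mean-free $q$ one solves the Neumann problem $-\Delta\phi=q$, $\bnu\cdot\nabla\phi|_{\partial\Omega}=0$ (solvable because $\bar q=0$) and sets $\bv=-\nabla\phi$, obtaining $\bv\in\bH_0(\vdiv,\Omega)$ with $\vdiv\bv=q$ and $\|\bv\|_{\vdiv,\Omega}\lesssim\|q\|_{0,\Omega}$. This reconciliation of the zero-normal-trace space with the mean-free constraint is where I expect the main difficulty to lie.

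It remains to read off the estimate. Theorem~\ref{thm: Braess} yields $\enorm{(\bu,p)}\lesssim\|f\|_{Q'}$ with $\enorm{(\bu,p)}^2=\|\bu\|_{\vdiv,\Omega}^2+(1+\alpha)\|p\|_{0,\Omega}^2$, and the elementary equivalence $\sqrt{1+\alpha}\eqsim 1+\sqrt\alpha$ on $[0,1]$ gives $\|\bu\|_{\vdiv,\Omega}+(1+\sqrt\alpha)\|p\|_{0,\Omega}\lesssim\|f\|_{Q'}$. Uniqueness comes from setting $f=0$, which forces $\bar p=0$ and then $\bu=0$, $p=0$ through the reduced problem; existence comes from solving the reduced problem and appending $\bar p=0$, after which the full system is verified directly, the constant test functions being absorbed by $\bar p=0$ and $\bar f=0$.
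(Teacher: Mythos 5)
Your proposal is correct and follows essentially the same route as the paper: decompose $p$ into its mean and deviation, use $\Pi_{\mathbb{R}}$ applied to the second equation to conclude $\bar p = \bar f = 0$, reduce to the perturbed saddle-point problem on $\bH_0(\vdiv,\Omega)\times L^2(\Omega)/\mathbb{R}$ with lower-right block $\alpha I$, and verify the Braess condition with the test pair $(\bu,\nabla\cdot\bu)$. The only differences are cosmetic — you spell out the standard inf-sup construction via the Neumann Laplacian (which the paper cites as well known) and obtain existence from the reduced problem rather than from the strong form.
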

\begin{proof}
	We first consider existence. Letting $p$ be the solution to \eqref{eq: Helmholtz Neumann strong} and $\bu = - \nabla p$, it follows that $(\bu, p)$ solves \eqref{eq:Neumann mixed}. Uniqueness, on the other hand, follows by establishing the bound on the solution.

	We decompose the solution $p$ into its mean $\bar{p} \in \mathbb{R}$ and the deviation $\mathring{p} \in L^2(\Omega)/\mathbb{R}$. Let us consider these components separately. First, by applying $\Pi_{\mathbb{R}}$ to the second equation, we note that $\bar{p}$ is given by
	\begin{align*}
		\bar{p} = \bar{f} = 0.
	\end{align*}
	Secondly, $(\bu, \mathring{p}) \in \bH_0(\vdiv,\Omega) \times L^2(\Omega) / \mathbb{R}$ solves
	\begin{align} \label{eq: aux prob p0}
		\begin{bmatrix}
		I & \nabla\\
		\nabla\cdot & \alpha I \\
		\end{bmatrix}\begin{bmatrix}
		  \bu \\
		  \mathring{p}
		\end{bmatrix}=\begin{bmatrix}
		0\\
		\mathring{f}
		\end{bmatrix}.
	\end{align}

	This problem can be analyzed in the context of Theorem~\ref{thm: Braess}. We define the spaces $V := \bH_0(\vdiv,\Omega)$ and $Q_b = Q := L^2(\Omega) / \mathbb{R}$ and introduce the norms
	\begin{align}
		\| \bu \|_V &:= \| \bu \|_{\vdiv, \Omega}, &
		\| p \|_b &= | p |_c := \| p \|_{0, \Omega}.
	\end{align}
	
	The Brezzi conditions \eqref{eqs: Brezzi} are well-known to be satisfied for these spaces and norms. Moreover, the Braess condition \eqref{eq: Braess inf-sup} can be verified by assuming given $\bu$, setting $(\bv, \mathring{q}) = (\bu, \nabla \cdot \bu)$, and noting that
	\begin{subequations}
		\begin{align}
			a(\bu, \bv) + b(\bu, q) &= \| \bu \|_V^2, \\
			\enorm{ (\bv, q) }^2
			= \| \bu \|_V^2 + (1 + \alpha) \| \nabla \cdot \bu \|_{0, \Omega}^2
			&\lesssim \| \bu \|_V^2.
		\end{align}
	\end{subequations}
	Hence, the assumptions of Theorem~\ref{thm: Braess} are satisfied and we obtain the result by noting that $p = \mathring{p}$ and $f = \mathring{f}$.
\end{proof}

\begin{remark}
	Unlike \eqref{eq: Helmholtz Neumann strong}, problem \eqref{eq:Neumann mixed} does not require compatible data, and is uniquely solvable for all $f \in L^2(\Omega)$. The ``incompatibility'' is captured in the mean of $p$ since $\bar{p} = \bar{f}$. The stability result of Lemma~\ref{lem: Neumann well-posed} remains valid since we trivially have $\| \bar{p} \|_Q = \| \bar{f} \|_{Q'}$.
\end{remark}

As a direct consequence of Lemma~\ref{lem: Neumann well-posed}, we propose the following preconditioner for \eqref{eq:Neumann mixed}
\begin{align}\label{eq:Poisson_Neumann_precond}
	\BB = 
	\begin{bmatrix}
		(I - \nabla \nabla \cdot)^{-1} & 0 \\
		0 & ((1 + \alpha)I)^{-1}
	\end{bmatrix}.
\end{align}

Before returning to the problem of linear poroelasticity, we stress that a large class of problems can be put in the framework developed in this section. As an example, in Appendix~\ref{herrmann} we discuss the application into the discretization of Herrmann's formulation of linear elasticity \cite{herrmann1965elasticity}, where the additional unknown of solid pressure is added to avoid volumetric locking. 

\section{Back to the four-field formulation of Biot equations}\label{sec:biot}

At this point, we want to apply the same strategy as in Section \ref{sec:helmholtz}
to construct a preconditioner for the Biot system $\AA$ from \eqref{eq:biot} (endowed with the boundary and initial conditions \eqref{eqs: bdry and init conditions}). Let us consider the function spaces 
\begin{align*}
    V &:=  2\mu\bH^1_{\Gamma^{\bu}}(\Omega) \times [(\tau K)^{-\frac12} \bL^2(\Omega)\cap \bH_{\Gamma^{\bu}}(\vdiv,\Omega)], \\
    Q_b &:= \mu^{-1}L^2(\Omega) \times [(\tau K)^{\frac12} H^1(\Omega) + L^2(\Omega)], 
\end{align*}
and introduce the following (semi-)norms
\begin{subequations} \label{eq:norm-p1}
\begin{align}
	\| \vec \bu \|_V^2 &= 
	2 \mu \| \beps(\bu) \|_{0,\Omega}^2 
	+ (\tau K)^{-1} \| \bq_\tau \|_{0,\Omega}^2 
	+ \| \nabla \cdot \bq_\tau \|_{0,\Omega}^2, \\
	\| \vec p \|_b^2 &= 
	\mu^{-1} \| p_T \|_{0,\Omega}^2 + \| p \|_{L^2(\Omega) + (\tau K)^{\frac12} H^1(\Omega)}^2,\\
	| \vec p |_c^2 &=
	\frac{1}{\lambda} \| p_T + \alpha p \|_{0,\Omega}^2 
	+ c \| p \|_{0,\Omega}^2.
\end{align}
\end{subequations}

We define $Q$ as the subspace of $Q_b$ consisting of elements $\vec q$ with $| \vec q |_c < \infty$. Density of $Q$ in $Q_b$ follows once more from the fact that $Q^{\perp_{Q_b}} = \{ 0 \}$. 

The energy norm is given by \eqref{eq: energy norm}, and we repeat it here for convenience:
\begin{align}
	\enorm{(\vec \bv, \vec q)}^2 
	:= \| \vec \bv \|_V^2 + \| \vec q \|_b^2 + t^2 | \vec q |_c^2.
\end{align}
Since the scaling is not exactly given by a single parameter, we introduce $t$ as a scaling on the $C$-block and note that $t = 1$ corresponds to the original problem \eqref{eq:biot}. The limit case with $(\lambda, c) \to (\infty, 0)$ is then equivalent to setting $t = 0$. Since Theorem~\ref{thm: Braess} covers both cases, it forms the fundamental ingredient in our main result, presented in the following theorem.

\begin{theorem}\label{thm: biot well-posed}
	Problem \eqref{eq:biot} is well-posed and the solution $(\vec \bu, \vec p) \in V \times Q$ satisfies
	\begin{align}
		\| \vec \bu \|_V^2 + \| \vec p \|_b^2 + | \vec p |_c^2 \lesssim \| F \|_{V'} + \| G \|_{Q'}.
	\end{align}
\end{theorem}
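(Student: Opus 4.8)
The plan is to apply Theorem~\ref{thm: Braess} directly, verifying its three ingredients (continuity, the two Brezzi conditions, and the Braess condition) for the block structure \eqref{eq:ABBC} induced by $\AA$ in \eqref{eq:biot}, using the spaces $V, Q_b, Q$ and the (semi-)norms \eqref{eq:norm-p1}. First I would dispatch the routine items: symmetry and positivity of $a$ and $c$ are immediate, and $|\vec q|_c^2 = c(\vec q, \vec q)$ reproduces exactly the $c$-seminorm in \eqref{eq:norm-p1}. Continuity of $a$ and $b$ follows by matching the weights in \eqref{eq:norm-p1}; in particular the $\mu$-weighting of $p_T$ and the $(\tau K)$-weighting of $\bq_\tau$ are chosen precisely so that the Cauchy--Schwarz estimates for $b(\vec\bv, \vec q) = \int_\Omega \nabla\cdot\bv\, q_T - \int_\Omega \nabla\cdot\bzeta\, q$ are uniform in the model parameters.

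For the coercivity condition \eqref{eq: coercivity A}, the key observation is that $b$ is block-diagonal in the sense that $q_T$ pairs only with $\bv$ and $q$ only with $\bzeta$. Testing against all $q_T \in \mu^{-1}L^2(\Omega)$ forces $\nabla\cdot\bv = 0$, while testing against all $q$ in the second pressure space (which contains $L^2(\Omega)$) forces $\nabla\cdot\bzeta = 0$. Hence on $\Ker B$ the divergence contribution to $\| \vec\bv \|_V$ vanishes, so $a(\vec\bv, \vec\bv) = 2\mu\|\beps(\bv)\|_{0,\Omega}^2 + (\tau K)^{-1}\|\bzeta\|_{0,\Omega}^2$ coincides with $\|\vec\bv\|_V^2$ there, and coercivity follows from Korn's inequality (valid since $|\Gamma^{\bu}| \ne 0$). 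The inf-sup condition \eqref{eq: inf-sup B} decouples along the same block structure: the $p_T$ component reduces to the standard surjectivity of $\nabla\cdot : \bH^1_{\Gamma^{\bu}}(\Omega) \to L^2(\Omega)$, where the $\mu$-weights on the two sides cancel, and the fluid-pressure component is exactly the weighted Darcy inf-sup already established in Section~\ref{sec:helmholtz_dirichlet} following \cite{baerland2018observation}, here posed in $L^2(\Omega) + (\tau K)^{1/2} H^1(\Omega)$.

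The heart of the argument is the Braess condition \eqref{eq: Braess inf-sup}, which I would treat by mimicking Lemma~\ref{lem: Helmholtz well-posed}. Given $\vec\bu = (\bu, \bq_\tau) \in V$, I would choose the test pair $\vec\bv = \vec\bu$ and $\vec q = (0, -\nabla\cdot\bq_\tau)$. Then $a(\vec\bu, \vec\bv) + b(\vec\bu, \vec q) = 2\mu\|\beps(\bu)\|_{0,\Omega}^2 + (\tau K)^{-1}\|\bq_\tau\|_{0,\Omega}^2 + \|\nabla\cdot\bq_\tau\|_{0,\Omega}^2 = \|\vec\bu\|_V^2$, so the numerator is exactly $\|\vec\bu\|_V^2$. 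It remains to bound the denominator by $\|\vec\bu\|_V$: the $\|\vec\bv\|_V^2$ contribution equals $\|\vec\bu\|_V^2$, while $\|\vec q\|_b^2 = \|\nabla\cdot\bq_\tau\|_{L^2(\Omega) + (\tau K)^{1/2}H^1(\Omega)}^2 \le \|\nabla\cdot\bq_\tau\|_{0,\Omega}^2$, by taking the zero representative in the summation-space infimum.

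The step I expect to be the main obstacle is the remaining seminorm term $t^2|\vec q|_c^2 = t^2(\alpha^2\lambda^{-1} + c)\|\nabla\cdot\bq_\tau\|_{0,\Omega}^2$. Since $\alpha \le 1$ and $t \le 1$, this is dominated by $(\lambda^{-1} + c)\|\nabla\cdot\bq_\tau\|_{0,\Omega}^2$ and stays $\lesssim \|\vec\bu\|_V^2$ in the relevant regime where $\lambda$ is bounded below and $c$ bounded above; this is exactly where the coupling of the fluid-pressure scaling to the solid must be controlled, and the reason the summation norm $L^2(\Omega) + (\tau K)^{1/2}H^1(\Omega)$ (rather than a pure weighted $L^2$ norm) is essential, as it lets the $\|\vec q\|_b$ bound above survive uniformly as $K \to 0$. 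With the three hypotheses verified, Theorem~\ref{thm: Braess} yields stability of \eqref{eq:biot} in the energy norm at $t = 1$; combined with the continuity of the functionals $F$ and $G$ on $V'$ and $Q'$, this delivers the asserted bound on $\|\vec\bu\|_V^2 + \|\vec p\|_b^2 + |\vec p|_c^2$.
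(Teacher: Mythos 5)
Your proposal is correct and follows essentially the same route as the paper: coercivity of $a$ on $\Ker B$ using $\nabla\cdot\bq_\tau=0$, the decoupled inf-sup via the $\mu$-scaled Stokes condition plus the weighted Darcy inf-sup of \cite{baerland2018observation}, and the Braess condition with the test pair $\vec\bv=\vec\bu$, $\vec q=(0,\pm\nabla\cdot\bq_\tau)$, bounding $\|\vec q\|_b$ by the zero representative in the summation norm and $|\vec q|_c$ via $\alpha^2/\lambda + c\lesssim 1$. Your sign choice $q=-\nabla\cdot\bq_\tau$ is in fact the one consistent with the stated definition of $b$, so this is only a cosmetic deviation from the paper's proof.
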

\begin{proof}
	We show that the assumptions of Theorem~\ref{thm: Braess} are satisfied. Thus, let us consider the two Brezzi conditions \eqref{eqs: Brezzi} and the additional Braess condition \eqref{eq: Braess inf-sup}:
\begin{itemize}
	\item \emph{Coercivity of $A$ on $\Ker B$.}
	For $\vec \bv = (\bu, \bq_\tau)\in \Ker B$, we have $\nabla \cdot \bq_\tau = 0$. It then directly follows that
	\begin{align*}
		a(\vec \bv, \vec \bv) &=
		2 \mu \| \beps(\bu) \|_{0,\Omega}^2 
		+ (\tau K)^{-1} \| \bq_\tau \|_{0,\Omega}^2 
		= \| \vec \bv \|_V^2, & \forall \vec \bv \in \Ker B.
	\end{align*}
	\item \emph{Inf-sup of $B^{\tt{t}}$.}
	Let $\vec p = (p_T, p) \in Q_b$ be given. The usual inf-sup condition of Stokes problems, after a scaling by $\mu$, gives us that
	\begin{align*}
		\sup_{\bv} 
		\frac{		
		(\nabla \cdot \bv, p_T)_\Omega
		}{
		\| \mu^{\frac12} \bv \|_{1, \Omega}
		}
		&\gtrsim 
		\| \mu^{-\frac12} p_T \|_{0, \Omega}.
	\end{align*}
	Moreover, it was shown in \cite{baerland2018observation} that 
	\begin{align*}
		\sup_{\bq_\tau}
		\frac{(\nabla \cdot \bq_\tau, p)_\Omega}
		{\| \bq_\tau \|_{(\tau K)^{-\frac12}\bL^2(\Omega)\cap \bH(\vdiv,\Omega)} } 
		&\gtrsim 
		\| p \|_{L^2(\Omega) + (\tau K)^{\frac12} H^1(\Omega)}.
	\end{align*}
	Combining the above, we obtain
	\begin{align*}
		\sup_{\vec \bv}
		\frac{b(\vec \bv, \vec p)}
		{\| \vec \bv \|_V}
		&=
		\sup_{\vec \bv}
		\frac{-(\nabla \cdot \bv, p_T)_\Omega + (\nabla \cdot \bq_\tau, p)_\Omega}
		{\| \vec \bv \|_V}
		\gtrsim
		\| \vec p \|_b.
	\end{align*}
	\item \emph{Inf-sup of $A + B$.}
	Let $\vec \bu = (\bu, \bq_\tau) \in V$ be given. We then choose $\vec \bv = \vec \bu$ and $\vec q = (0, \nabla \cdot \bq_\tau)$ to derive
	\begin{align*}
	&	a(\vec \bu, \vec \bv) + b(\vec \bu, \vec q)
		= 
		2 \mu \| \beps(\bu) \|_{0,\Omega}^2 
		+ (\tau K)^{-1} \| \bq_\tau \|_{0,\Omega}^2 
		+ \| \nabla \cdot \bq_\tau \|_{0,\Omega}^2
		= \| \vec \bu \|_V^2, \\
		&\enorm{(\vec \bv, \vec q)}^2 
		=
		\| \vec \bu \|_V^2 
		+ \| \nabla \cdot \bq_\tau \|_{L^2(\Omega) + (\tau K)^{\frac12} H^1(\Omega)}^2
		+ \left(\frac{\alpha^2}{\lambda} + c \right) 
		\| \nabla \cdot \bq_\tau \|_{0, \Omega}^2
		\\
		&\qquad \qquad \lesssim
		\| \vec \bu \|_V^2 
		+ \| \nabla \cdot \bq_\tau \|_{0, \Omega}^2 \\
		&\qquad \qquad \lesssim \| \vec \bu \|_V^2.
	\end{align*}
\end{itemize}
	The proof is finalized by invoking Theorem~\ref{thm: Braess} and noting that $t = 1$ forms a special case.
\end{proof}

Given that Theorem~\ref{thm: biot well-posed} establishes a parameter-robust stability, we can straightforwardly use the general approach from \cite{mardal-winther} to construct the following preconditioner involving the specific norms \eqref{eq:norm-p1}
\begin{equation}\label{eq:biot_preconditioner}
\BB = \begin{bmatrix}
  \left(-\bnabla\cdot(2\mu\beps)\right)^{-1} & 0 & 0 & 0 \\
  0 & \left((\tau K)^{-1}I - \nabla\nabla\cdot\right)^{-1} & 0 & 0 \\
  0 & 0 & \multicolumn{2}{c}{\multirow{2}{*}{$\mathcal{P}$}} \\
  0 & 0 & 
  \end{bmatrix},
\end{equation}
where the fluid and total 
pressure preconditioner, $\mathcal{P}$, is a $2\times 2$ operator defined as
\[
\mathcal{P} = \left(\begin{bmatrix} \frac{1}{\mu} I & 0\\ 0 & I\end{bmatrix} + \mathcal{C}\right)^{-1}
  +
  \left(\begin{bmatrix} \frac{1}{\mu} I & 0\\ 0 & -\tau K \Delta\end{bmatrix} + \mathcal{C}\right)^{-1},
\mbox{ and }\quad   
  \mathcal{C} = \begin{bmatrix}
		\frac{1}{\lambda}  & \frac{\alpha}{\lambda} \\
		\frac{\alpha}{\lambda} & c + \frac{\alpha^2}{\lambda}
\end{bmatrix}.
  \]
  We expect such preconditioner to be robust in the sense that the condition number of the (left-)preconditioned matrix $\BB \AA$ is bounded uniformly in the parameters $\{\mu, K, \tau, \lambda, \alpha, c\}$. 

\section{Discrete stability}
\label{sec:discrete}

\subsection{Abstract setting}
In order to define a finite element method, let $\cT_{h}$ be a
 conforming simplicial partition of 
$\bar\Omega$, constituted by tetrahedra (or triangles 
in 2D) $K$ of diameter $h_K$, with mesh size
$h:=\max\{h_K:\; K\in\cT_{h}\}$. The mesh is considered shape-regular. 
Given an integer $s\ge 0$ and a generic element
$K \in \cT_h$, the symbol $\mathbb{P}_s(K)$ will denote the 
space of polynomial functions defined locally on the element
$K$ and being of degree no greater than $s$.

For generic and conforming finite-dimensional subspaces $V_h\subset V$, $Q_h\subset Q$, let us consider the Galerkin scheme arising from the discretization of \eqref{eq:ABBtC}
\begin{subequations}\label{eq:discrete-abstract}
\begin{align}
    a(u_h,v_h)  +b(v_h,p_h) & = F(v_h) \qquad \forall  v_h\in V_h,\\
    b(u_h,q_h) - t^2 c(p_h,q_h)& = -G(q_h) \qquad \forall q_h\in Q_h,
\end{align}\end{subequations}
for which the following direct consequence of Theorem~\ref{thm: Braess} holds.
\begin{lemma}\label{lem: discrete-stab} Assume that $V_h,Q_h$ associated with adequate norms $\|\cdot\|_{V_h}$, $\|\cdot\|_{Q_{b,h}}$, $\enorm{(\cdot,\cdot)}_{h}$, fulfill the following discrete counterparts of the Brezzi conditions 
\begin{subequations} 
	\begin{align}
		\forall v_h &\in \Ker B_h, & 
		a(v_h, v_h) &\gtrsim \| v_h \|_{V_h}^2,  \label{eq: brezzi-h-1}\\
		\forall p_h &\in Q_{b,h}, &
		\sup_{v_h \in V_h} \frac{b(v_h, p_h)}{\| v_h \|_{V_h}} &\gtrsim \| p_h \|_{Q_{b,h}},  \label{eq:  brezzi-h-2}
	\end{align}
\end{subequations}
together with the discrete analogue of the Braess condition \eqref{eq: Braess inf-sup}, 
	\begin{align}
		\forall u_h &\in V_h, & 
		\sup_{(v_h, q_h) \in V_h \times Q_h} \frac{a(u_h, v_h) + b(u_h, q_h)}{\enorm{(v_h, q_h)}_h} &\gtrsim \| u_h \|_{V_h},  \label{eq: braess-h}
	\end{align}
all \eqref{eq: brezzi-h-1}, \eqref{eq:  brezzi-h-2}, and \eqref{eq: braess-h} with constants independent of 
the mesh size $h$ and of the perturbation parameter $t$. Then there exists a unique solution  
$(u_h,p_h) \in V_h \times Q_h$
to \eqref{eq:discrete-abstract}, which is stable in the energy norm $\enorm{(\cdot,\cdot)}_{h}$.   
\end{lemma}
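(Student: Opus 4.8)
The plan is to recognize that the Galerkin scheme \eqref{eq:discrete-abstract} is nothing but problem \eqref{eq:ABBtC} posed on the finite-dimensional pair $(V_h, Q_h)$ in place of $(V, Q)$, and then to invoke Theorem~\ref{thm: Braess} verbatim in this discrete setting. Since Theorem~\ref{thm: Braess} is an abstract result whose hypotheses refer only to the structural properties of the three bilinear forms together with the two Brezzi conditions and the Braess condition, it is insensitive to whether the underlying spaces are infinite- or finite-dimensional. The entire argument therefore reduces to checking that all the hypotheses of the theorem are met by the triple $(a, b, c)$ restricted to $V_h \times Q_h$, equipped with the norms $\|\cdot\|_{V_h}$, $\|\cdot\|_{Q_{b,h}}$, and $\enorm{(\cdot, \cdot)}_h$.

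First I would verify the structural assumptions. Because $V_h \subset V$ and $Q_h \subset Q$ are conforming subspaces, continuity of $a$ and $b$, as well as the symmetry and positive semi-definiteness of $a$ and $c$, are inherited immediately by restriction from the continuous forms; no new constants are introduced. The density requirement that $Q_h$ be dense in $Q_{b,h}$, demanded by the abstract framework, is automatic here, since $Q_h$ and $Q_{b,h}$ coincide as sets and differ only in the norm with which they are equipped, and a finite-dimensional space is trivially dense in itself. The two discrete Brezzi conditions \eqref{eq: brezzi-h-1} and \eqref{eq:  brezzi-h-2}, together with the discrete Braess condition \eqref{eq: braess-h}, are supplied directly as hypotheses of the lemma, so they need not be re-derived.

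With all hypotheses in hand, I would apply Theorem~\ref{thm: Braess} to the discrete problem to obtain the \emph{a priori} estimate $\enorm{(u_h, p_h)}_h \lesssim \|F\|_{V_h'} + \|G\|_{Q_h'}$, i.e. stability in the discrete energy norm. Existence and uniqueness then follow by a standard finite-dimensional argument: \eqref{eq:discrete-abstract} is a square linear system, and the stability estimate applied to the homogeneous right-hand side forces $(u_h, p_h) = (0, 0)$, so the system is injective and hence invertible. Crucially, because the stability constant furnished by the theorem depends only on the constants appearing in the continuity, Brezzi, and Braess inequalities, and these are assumed independent of $h$ and of the perturbation parameter $t$, the resulting discrete stability constant is likewise independent of $h$ and $t$.

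The genuine content of the statement lies not in this derivation but in its hypotheses. The difficulty of discrete stability is precisely that the discrete Brezzi and Braess conditions need not follow from their continuous counterparts, because the suprema in \eqref{eq:  brezzi-h-2} and \eqref{eq: braess-h} are taken over the smaller spaces $V_h$ and $V_h \times Q_h$. The lemma assumes these conditions outright, so the only point requiring attention in the proof itself is the (trivial) transfer of the structural hypotheses to the conforming subspaces. Verifying the discrete inf-sup conditions with $h$- and $t$-uniform constants for concrete choices of $V_h$ and $Q_h$ is the real obstacle, and is naturally deferred to the construction of specific finite elements.
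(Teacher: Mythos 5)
Your proposal is correct and follows exactly the route the paper takes: the paper presents this lemma as a direct consequence of Theorem~\ref{thm: Braess} applied on the conforming finite-dimensional subspaces, with the discrete Brezzi and Braess conditions supplied as hypotheses. Your write-up simply makes explicit the (trivial) transfer of the structural assumptions and the finite-dimensional existence/uniqueness argument that the paper leaves implicit.
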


\subsection{Discrete generalized Poisson  problem}\label{sec:discrete_helmholtz}
According to Lemma~\ref{lem: discrete-stab}, the stability of discretizations to \eqref{eq:Helmholtz} holds as long as 
the discrete Brezzi conditions and the discrete Braess condition are met by the chosen finite-dimensional spaces $V_h,Q_h$.  

Following \cite{baerland2018observation}, 
let us first denote by $\widehat{V}_h$ the space of $\bH(\vdiv,\Omega)$-conforming vector functions approximated using Brezzi-Douglas-Marini elements of order $s+1$ \cite{bdm85}, or Raviart-Thomas elements of order $s$ \cite{rt77}, and let $\widehat{Q}_h$ be the  space of discontinuous piecewise polynomials of order $s$. Then we can use the discrete gradient operator $\nabla_h:\widehat{Q}_h \to \widehat{V}_h$ defined by 
\begin{equation}\label{eq:discrete_grad}
(\nabla_h q_h, \bv_h ) = - (q_h, \nabla\cdot \bv_h),
\end{equation}
to define a discrete $H^1$-norm as $\| \nabla_h q_h \|_{0,\Omega}$. This suffices to construct the approximation space for the potential as 
\begin{equation}\label{eq:Qh-Helmholtz}
 Q_{b,h} = L_h^2 + K^{\frac12} H_h^1,
 \end{equation}
where $L_h^2$ corresponds to $\widehat{Q}_h$ equipped with the usual $L^2$-norm and $H_h^1$ denotes the space conformed by the set $\widehat{Q}_h$ in combination with the discrete $H^1$-norm. Therefore the norm associated with 
\eqref{eq:Qh-Helmholtz} is 
\begin{equation*}
\| q_h \|^2_{Q_{b,h}} = \inf_{r_h\in \widehat{Q}_h} \biggl( \| q_h - r_h\|_{0,\Omega}^2 + \| K^{\frac12} \nabla_h r_h\|_{0,\Omega}^2\biggr).
\end{equation*}
On the other hand, for the flux we consider 
\begin{equation}\label{eq:Vh-Helmholtz}
V_h = K^{-\frac12} \bL_h^2 \cap \bH_h(\vdiv),
\end{equation}
where $\bL_h^2$ corresponds to $\widehat{V}_h$ equipped with the usual $\bL^2$-norm and $\bH_h(\vdiv)$ denotes the space conformed by the set $\widehat{V}_h$ in combination with the usual $\bH(\vdiv)$-norm. Since $V_h \subseteq V$, we endow $V_h$ with the $V$-norm \eqref{eq:helmholtz-vnorm}. 

It is possible to verify that these spaces satisfy the conditions of Lemma~\ref{lem: discrete-stab}. For instance, notice that if we adapt the proof of the continuous result in Lemma~\ref{lem: Helmholtz well-posed} to the discrete setting, we need that $\nabla\cdot \bu_h \in Q_h$, which holds for the chosen pairs of mixed finite element spaces.
 
\subsection{Discrete mixed Biot consolidation system}
For the case of Biot equations, considering again Lemma~\ref{lem: discrete-stab} but in the context of Theorem~\ref{thm: biot well-posed}, we can identify conditions for the discrete solvability and robust stability. It turns out that the approximation spaces for displacement and  total pressure need to be inf-sup stable in the sense of the Brezzi conditions, and also the pair of spaces for filtration velocity and fluid pressure need to satisfy the discrete Brezzi conditions plus the additional requirement that $(0,\nabla\cdot \bq_{\tau,h}) \in Q_{b,h}$ (however it is not required that the divergence of discrete displacements belongs to the space of discrete total pressures).  

As feasible choices for the approximation spaces for two-dimensional problems, we therefore take overall continuous and vector-valued, piecewise polynomials of degree $s+2$ to approximate displacements (denoted $\widetilde{V}_h$), and discontinuous and piecewise polynomials of degree $s$ for the total pressure (denoted $\widehat{Q}_h$, as before).
Alternatively, we may choose the Taylor-Hood pair for both two and three-dimensional problems.
Likewise, discrete inf-sup stability is required for the fluid flux-pressure pair, for which we consider  $\bH(\vdiv,\Omega)$-conforming discretizations of the percolation fluxes using Brezzi-Douglas-Marini elements of order $s+1$ or Raviart-Thomas elements of order $s$ (denoted $\widehat{V}_h$ as before), and piecewise polynomials (overall discontinuous) of degree $s$ for the fluid pressure. 

Similar to the modified Helmholtz system, in the spaces 
\begin{equation}\label{eq:Vh-Qh-Biot}
V_h = 2\mu \bH_{\Gamma^{\bu},h}^1\times \left[ (\tau K)^{-\frac12} \bL_h^2 \cap \bH_{\Gamma^{\bu},h}(\vdiv)\right], \quad 
Q_{b,h} = \mu^{-1} L^2_h \times \left[(\tau K)^{\frac12} H^1_h + L_h^2\right],
\end{equation}
the discrete space $\bH_{\Gamma^{\bu},h}^1$ corresponds to $\widetilde{V}_h$ (restricted to discrete functions vanishing on $\Gamma^{\bu}$) equipped with the usual $\bH^1$-norm and $\bH_{\Gamma^{\bu},h}(\vdiv)$ denotes the space conformed by the set $\widehat{V}_h$ (restricted to discrete functions with normal traces vanishing on $\Gamma^{\bu}$) in combination with the usual $\bH(\vdiv)$-seminorm. On $V_h$ we can use the $V$-norm defined in \eqref{eq:norm-p1}, while for $Q_h$ we employ the norm 
\begin{align*}
\| \vec{q}_h \|^2_{Q_h} := & \frac{1}{\mu} \|q_{T,h}\|_{0,\Omega}^2 + 
 \inf_{r_h\in \widehat{Q}_h} \biggl( \| q_h - r_h\|_{0,\Omega}^2 + \| K^{\frac12} \nabla_h r_h\|^2_{0,\Omega}\biggr) \\
 &\qquad + 
 \frac{1}{\lambda} \| q_{T,h} + \alpha q_h \|^2_{0,\Omega} + c\| q_h\|^2_{0,\Omega}.
\end{align*}

\section{Numerical results}\label{sec:results}
We demonstrate robustness of the proposed preconditioners by considering
spectra of the preconditioned systems. More precisely,
given problem operator $\AA$ and a preconditioner $\BB$,
we are interested in stability of the condition numbers $\lambda_{h, \max}/\lambda_{h, \min}$,
where $\lambda_{h, \max}$, $\lambda_{h, \min}$ are the largest and
smallest (in magnitude) eigenvalues of the generalized eigenvalue problem
$\AA_h x=\lambda_h \BB_h x$ with $\AA_h$, $\BB_h$
being the respective finite element approximations of the operators.

We remark that rather than the discrete $H^1$-norm defined in terms of the discrete 
gradient operator \eqref{eq:discrete_grad}, we use an equivalent (see \cite{Rusten1996InteriorPP}), 
more implementation-friendly, norm defined in terms of the bilinear form $\Delta_h:\widehat{Q}_h\times \widehat{Q}_h\rightarrow\mathbb{R}$ given by
\[
\Delta_h(p_h, q_h) = \sum_{K\in\mathcal{T}_h} \int_K \nabla p_h \nabla p_h\mathrm{d}x + 
\sum_{E\in\mathcal{E}_I} \int_{E} \frac{1}{\avg{h_E}}\jump{p_h}\jump{q_h} \mathrm{d}s  +
\sum_{E\in\mathcal{E}_D} \int_{E} \frac{1}{h_E} p_h q_h \mathrm{d}s.    
\]
Here, $\mathcal{E}_I$ are the interior facets of $\mathcal{T}_h$ while $\mathcal{E}_D$ 
are the external facets associated with pressure (Dirichlet) boundary conditions.  
The jump and average values of $p_h\in \widehat{Q}_h$ are defined as 
$\avg{p_h}=\tfrac{1}{2}(p_h|_{K^{+}}+p_h|_{K^{-}})$ and
$\jump{p_h}=p_h|_{K^{+}}-p_h|_{K^{-}}$, respectively, with $K^{\pm}$ the two elements that 
share the internal facet.

\subsection{Robust preconditioners for the generalized Poisson equation}
In the following we let $\Omega=(0, 1)^2$ and $\mathcal{T}_h$ is a uniform structured triangulation
of the domain. 
\subsubsection*{Dirichlet boundary conditions} Using the (stable) discretization
given by $\mathbb{R}\mathbb{T}_0$-$\mathbb{P}_0$ elements, the robustness of \eqref{eq:darcy_B}
for the Dirichlet problem \eqref{eq:helmholtz_dirichlet_strong} can be seen in Figure \ref{fig:helm_p_rt0}. To strengthen 
the numerical evidence, the experiments were carried out also with the 
lowest order Brezzi-Douglas-Marini and $\mathbb{R}\mathbb{T}_1$-$\mathbb{P}_1$
elements. The results are given in Figures \ref{fig:helm_p_bdm1},  \ref{fig:helm_p_rt1}
in Appendix \ref{sec:other_elements}.
\begin{figure}
  \begin{center}
    \includegraphics[width=\textwidth]{./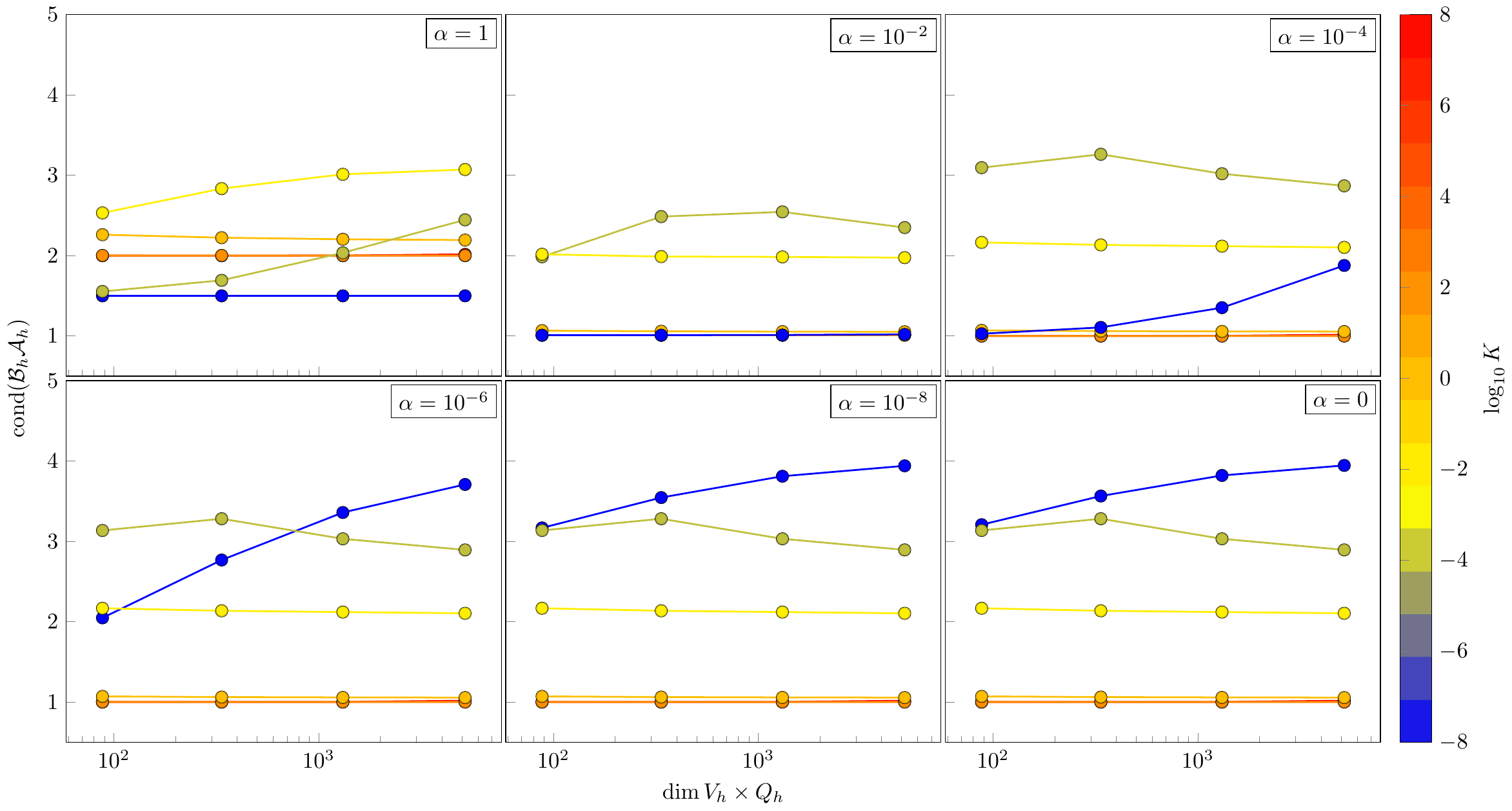}
    
    \vspace{-3mm}
    \caption{
      Performance of preconditioner \eqref{eq:darcy_B} for
      the generalized Poisson problem with pressure boundary conditions.
      Discretization by $\mathbb{R}\mathbb{T}_0$-$\mathbb{P}_0$ elements.
    }
    \label{fig:helm_p_rt0}
  \end{center}
\end{figure}

Before proceeding further, we  address two aspects of the analysis
in Section \ref{sec:helmholtz}. To compare \eqref{eq:darcy_B} with alternatives,
we recall a well-known $K$-robust preconditioner for the Darcy problem
(i.e. $\alpha=0$ in \eqref{eq:Helmholtz}) proposed in 
\cite{vassilevski2013block}. Extending it directly to the case of the generalized Poisson
problem, leads to 
\begin{equation}\label{eq:darcy_B_pannayot}
\BB_{\text{VV}}=
\begin{bmatrix}
  \left(K^{-1}I - K^{-1}\nabla\nabla\cdot\right)^{-1} & 0\\
  0 & \left(K I + \alpha I\right)^{-1}
\end{bmatrix}.
\end{equation}
However, as noted in Remark \ref{rmrk:vv}, Table \ref{tab:issues_vv}
shows that $K<\alpha$ leads to lack of robustness in $K$.
\begin{table}
  \begin{center}
      {
        \footnotesize{
\begin{tabular}{c|llll}
  \hline
  \multirow{2}{*}{$K$} & \multicolumn{4}{c}{$h$} \\
  \cline{2-5}
   & {$2^{-2}$ } & {$2^{-3}$} & {$2^{-4}$} & {$2^{-5}$}\\
\hline
$10^{-8}$ & 577 & 2306 & 9216 & --\\
$10^{-6}$ & 577 & 2300 & 9133 & --\\
$10^{-4}$ & 545 & 1874 & 4797 & 7867\\
$10^{-2}$ & 86 & 97 & 100 & 101\\
$10^{0}$ & 2.00 & 2.00 & 2.00 & 2.00\\
$10^{2}$ & 1.05 & 1.05 & 1.05 & 1.05\\
$10^{4}$ & 1.05 & 1.05 & 1.05 & 1.05\\
$10^{6}$ & 1.05 & 1.05 & 1.05 & 1.05\\
$10^{8}$ & 1.05 & 1.05 & 1.05 & 1.05\\
\hline
\end{tabular}
        }
        }
\end{center}    
\caption{Condition numbers of \eqref{eq:darcy_B_pannayot} for the generalized Poisson (or modified
  Helmholtz) problem and $\alpha=1$. Pressure boundary conditions
  are prescribed and $\mathbb{R}\mathbb{T}_0$-$\mathbb{P}_0$ is used
  for discretization. Condition numbers exceeding $10^{4}$ are indicated
  as --.
  In contrast, our proposed preconditioner \eqref{eq:darcy_B} does perform robustly for $K < \alpha$, as shown in Figure \ref{fig:helm_p_rt0}.
}
\label{tab:issues_vv}
\end{table}

Finally, Table \ref{tab:issues_alpha} illustrates the necessity of the assumption
of a small perturbation, i.e. $\alpha\leq 1$, for stability of the preconditioner
\eqref{eq:darcy_B}. Indeed, by setting $\alpha=10^{2}$, the sensitivity of
the condition numbers for $K<1$ becomes evident.
\begin{table}
  \begin{center}
      {
        \footnotesize{
\begin{tabular}{c|llll}
  \hline
  \multirow{2}{*}{$K$} & \multicolumn{4}{c}{$h$} \\
  \cline{2-5}
   & {$2^{-2}$ } & {$2^{-3}$} & {$2^{-4}$} & {$2^{-5}$}\\
\hline
$10^{-8}$  & 1.99 & 1.99 & 1.99 & 1.99\\       
$10^{-6}$  & 1.99 & 1.99 & 2.01 & 2.06\\       
$10^{-4}$  & 2.09 & 2.43 & 3.78 & 9.02\\       
$10^{-2}$  & 11 & 38 & 96 & 158\\  
$10^{0}$   & 166 & 190 & 197 & 198\\           
$10^{2}$   & 151 & 151 & 151 & 152\\           
$10^{4}$   & 151 & 152 & 152 & 152\\           
$10^{6}$   & 151 & 152 & 152 & 152\\           
$10^{8}$   & 151 & 152 & 152 & 152\\           
\hline
\end{tabular}
        }
        }
\end{center}    
  \caption{Condition numbers 
    of \eqref{eq:darcy_B} for the generalized Poisson (modified Helmholtz) problem \eqref{eq:Helmholtz}, setting 
  $\alpha=10^2$. Pressure boundary conditions
  are prescribed and $\mathbb{R}\mathbb{T}_0$-$\mathbb{P}_0$ is used
  for discretization.
  These results demonstrate the necessity of assuming small perturbations, i.e. $\alpha \le 1$.}
\label{tab:issues_alpha}
\end{table}

\subsubsection*{Neumann boundary conditions} Parameter stability of
preconditioner \eqref{eq:Poisson_Neumann_precond} for the Neumann problem 
\eqref{eq:Neumann mixed} is illustrated in Table \ref{tab:helm_neumann_K1}.
\begin{table}
  \begin{center}
  {
    \footnotesize{
\begin{tabular}{c|llll}
  \hline
  \multirow{2}{*}{$\alpha$} & \multicolumn{4}{c}{$h$} \\
  \cline{2-5}
   & {$2^{-2}$ } & {$2^{-3}$} & {$2^{-4}$} & {$2^{-5}$}\\
\hline
0        & 1.10 & 1.10 & 1.10 & 1.10\\
$10^{-8}$ & 1.10 & 1.10 & 1.10 & 1.10\\
$10^{-6}$ & 1.10 & 1.10 & 1.10 & 1.10\\
$10^{-4}$ & 1.10 & 1.10 & 1.10 & 1.10\\
$10^{-2}$ & 1.10 & 1.10 & 1.10 & 1.10\\
1        & 2.00 & 2.00 & 2.00 & 2.00\\
\hline
\end{tabular}
}}
\end{center}
  \caption{Condition numbers obtained for the Neumann problem \eqref{eq:Neumann mixed} using the  
    preconditioner \eqref{eq:Poisson_Neumann_precond}.}
    \label{tab:helm_neumann_K1}
\end{table}

Regarding the Neumann problem \eqref{eq:Neumann mixed} as a special case of
\begin{equation}\label{eq:Neumann mixed K}
\begin{bmatrix}
K^{-1} I & \nabla\\
\nabla\cdot & \alpha I + (1 - \alpha) \Pi_\mathbb{R} \\
\end{bmatrix}\begin{bmatrix}
  \bu \\
  p
\end{bmatrix}=\begin{bmatrix}
0\\
f
\end{bmatrix},
\end{equation}
let us finally address the preconditioning of problem \eqref{eq:Neumann mixed K}. 
Combining the analysis developed in Sections \ref{sec:helmholtz_dirichlet} and
\ref{sec:helmholtz_neumann}, we propose 
\begin{equation}\label{eq:darcy_B_neumann}
\BB=
\begin{bmatrix}
  \left(K^{-1}I - \nabla\nabla\cdot\right)^{-1} & 0\\
  0 & \left(\alpha I + (1-\alpha)\Pi_{\mathbb{R}}+ I\right)^{-1} + \left(\alpha I +(1-\alpha)\Pi_{\mathbb{R}} -K\Delta\right)^{-1}
\end{bmatrix},
\end{equation}
as a preconditioner for \eqref{eq:Neumann mixed K}. The robustness of such a 
preconditioner is demonstrated in Figure \ref{fig:helm_flux_rt0}, and
exemplified further in Figure \ref{fig:helm_flux_bdm1}.

\begin{figure}
  \begin{center}
    \includegraphics[width=\textwidth]{./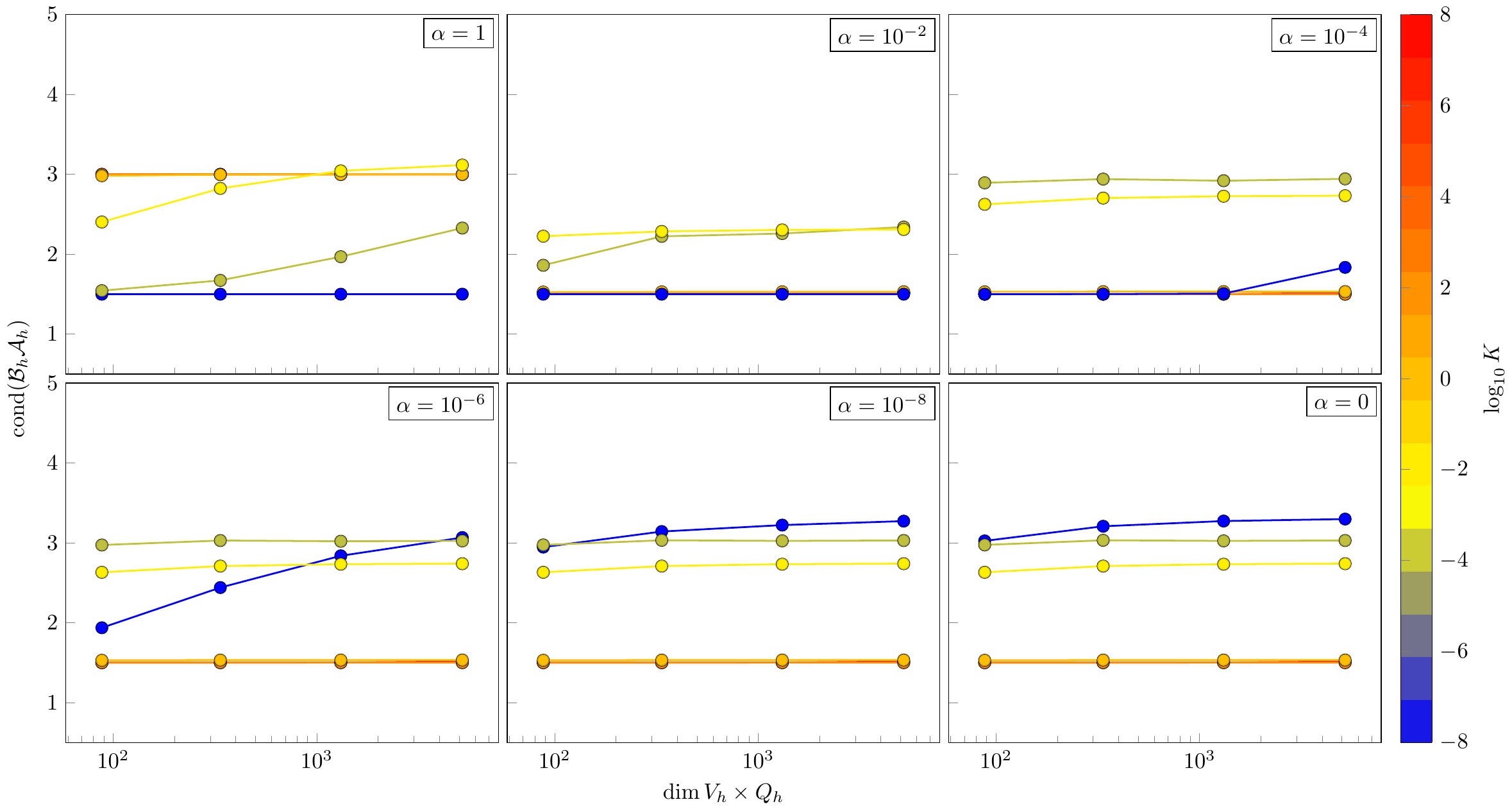}
    
    \vspace{-3mm}
    \caption{
      Performance of preconditioner \eqref{eq:darcy_B_neumann} for
      the generalized Poisson (simplified Helmholtz)  equation \eqref{eq:Neumann mixed K} with flux boundary conditions.
      Discretization by $\mathbb{R}\mathbb{T}_0$-$\mathbb{P}_0$ elements.
    }
    \label{fig:helm_flux_rt0}
  \end{center}
\end{figure}

\subsection{Robust preconditioners for the mixed Biot system}\label{sec:biot_experiments}
Due to its larger parameter space we restrict numerical
experiments for the Biot system to a single type of finite element discretization,
namely, we shall use continuous piecewise quadratic vector valued ($\mathbb{P}_2$)
functions for the displacement and lowest-order Raviart-Thomas
elements for the for the percolation velocity. The total 
pressure will be discretized using continuous piecewise linear 
Lagrange elements ($\mathbb{P}_1$) if $\Omega\subset\mathbb{R}^3$ while $\mathbb{P}_0$ 
is used in the two-dimensional case. Finally, the fluid pressure shall be
approximated by piecewise constants.

\subsubsection*{Parameter robustness} As in the case of the generalized Poisson
problem we consider a two-dimensional problem \eqref{eq:biot} with domain $\Omega=(0, 1)^2$, and
$\Gamma^{\bu}=\left\{(x, y)\in\partial\Omega: x=0\text{ or } x=1\right\}$,
$\Gamma^{\bsigma}=\partial\Omega\setminus\Gamma^{\bu}$. The domain is discretized
by a uniform mesh.

Figure \ref{fig:biot} shows variations of the condition numbers for the \eqref{eq:biot_preconditioner}-preconditioned Biot problem across the parameter ranges
$10^{-12}\leq K \leq 1$, $1\leq\lambda\leq 10^{16}$, $0\leq \alpha \leq 1$, $c < 0 \leq 1$.
It can be seen that the preconditioner yields bounded condition numbers (not exceeding
8 in the experiments). 
We remark that the time step $\tau$ was kept fixed at $\tau=1$ as its variations effectively 
translate to a modified hydraulic conductivity $K\tau$. 

\begin{figure}
  \begin{center}
    \includegraphics[width=\textwidth]{./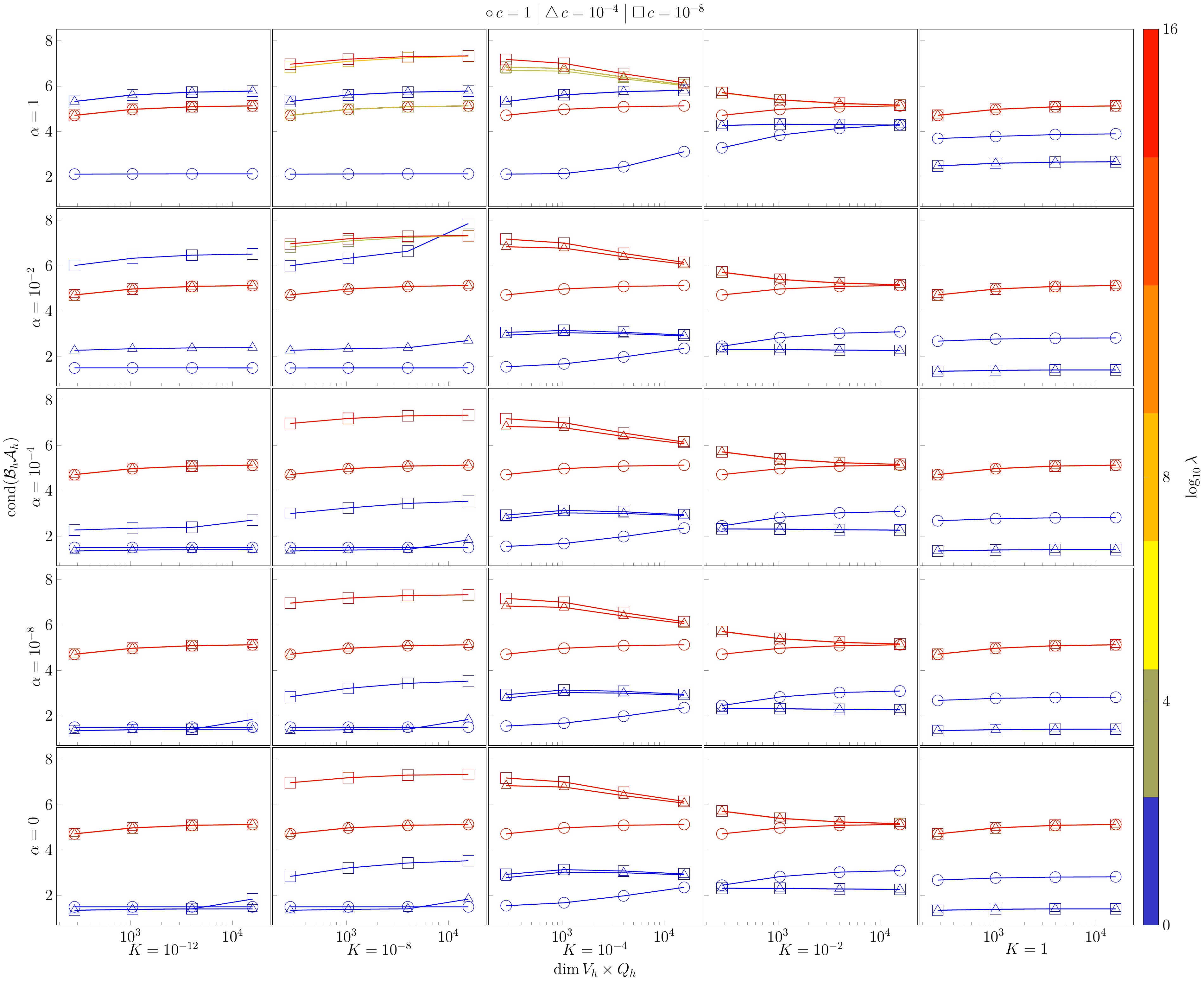}
    
    \vspace{-3mm}
    \caption{
      Performance of the preconditioner \eqref{eq:biot_preconditioner} for
      the Biot system \eqref{eq:biot}. Discretization
      by $\mathbb{P}_2$-$\mathbb{R}\mathbb{T}_0$-$\mathbb{P}_0$-$\mathbb{P}_0$
      elements. Values of $\alpha$ and $K$
      vary along the vertical, respectively horizontal axis, while color
      encodes Lam{\' e} constant $\lambda$, and storage capacity is depicted
      with markers. In most cases, the condition numbers for $\lambda > 1$ are
      very similar, leading to an overlap of the curves.
    }
    \label{fig:biot}
  \end{center}
\end{figure}

\subsubsection*{Scalable realization of the preconditioner}
Numerical experiments presen\-ted thus far have utilized the exact Biot preconditioner,
that is, each of the blocks was computed
by LU factorization. As such a construction is of limited interest in practical/large scale applications
we next briefly discuss the realization of \eqref{eq:biot_preconditioner} in terms
of off-the-shelf scalable components.

Indeed, the displacement block of the preconditioner is a standard operator
which can be efficiently realized by, e.g., multigrid \cite{brandt1986algebraic, vanvek1996algebraic}.
Similarly, geometric (see \cite{arnold2000multigrid}) and algebraic (see \cite{hdiv_amg}) multigrid methods
have been proposed for the Riesz map with respect to the weighted $\bH(\mbox{div})$-inner product, that in our context corresponds to  the flux preconditioner in \eqref{eq:biot_preconditioner}. These
methods have been shown to be robust in the respective  parameters (cf. experiments in \cite{hdiv_amg} for algebraic and
\cite[Section 4.1]{farrell2019pcpatch} for the geometric multigrid case).
Finally, to the best of the authors' knowledge, the approximation of the pressure block $\mathcal{P}$
of the Biot preconditioner has not been studied in literature. In this case, as the operator consists
of two inverses of symmetric elliptic operators, we expect multigrid methods to perform well. We remark, however, that the approximation might not be robust with respect to the model parameters.  

In order to illustrate the performance of the multigrid realization of the Biot preconditioner 
we consider the 3$d$ footing problem (see, e.g., \cite[Section 5.2.2]{gaspar2008distributive}) 
and set $\alpha=c=1/2$. Here the displacement preconditioner and
the components of $\mathcal{P}$ employ a single V-cycle of algebraic multigrid
(implemented in Hypre \cite{falgout2002hypre}) while the flux preconditioner is realized
with geometric multigrid using a hierarchy of three meshes in combination with the star  
smoother \cite{arnold2000multigrid}. The implementation has been carried out using the 
PCPATCH framework \cite{farrell2019pcpatch}.

Fixing the time step to $\tau=0.1$, Table \ref{tab:footing} displays the number of MinRes iterations
needed to reduce the preconditioned residual norm by a  factor of $10^6$ at each step
of the simulation. Taking the coarsest mesh for comparison, the use of multigrid 
approximately doubles the number of solver iterations as  compared to the exact
preconditioner. However, the iterations appear to be bounded in the mesh size. 
Samples of the approximate solution at the final time can be seen in Figure \ref{fig:footing}.

\begin{table}
  \begin{center}
  \footnotesize{
\begin{tabular}{cc|ccccc}
  \hline
  \multirow{2}{*}{$h^{-1}$} & \multirow{2}{*}{$\dim V_h\times Q_h$} & \multicolumn{5}{c}{$t$}\\
  \cline{3-7}
  & & 0.1 & 0.2 & 0.3 & 0.4 & 0.5\\
  \hline
16 & $1.88\times 10^{5}$ & 85(27) & 92(31) & 36(16) & 36(16) & 36(16)\\
32 & $1.46\times 10^{6}$ & 92 & 99 & 38 & 39 & 39\\
48 & $4.86\times 10^{6}$ & 101 & 110 & 41 & 41 & 41\\
\hline
\end{tabular}
  }
  \end{center}
\caption{
  Number of MinRes iterations obtained by using a  multigrid realization of the Biot preconditioner
  \eqref{eq:biot_preconditioner} for the 3$d$ footing problem. For the coarsest
  mesh the number in brackets indicates the iteration count using an exact preconditioner
  with blocks computed by a direct solver. Discretization by
  $\mathbb{P}_2$-$\mathbb{R}\mathbb{T}_0$-$\mathbb{P}_1$-$\mathbb{P}_0$ elements.
}
\label{tab:footing}
\end{table}

\begin{figure}
  \begin{center}
    \includegraphics[width=0.32\textwidth]{./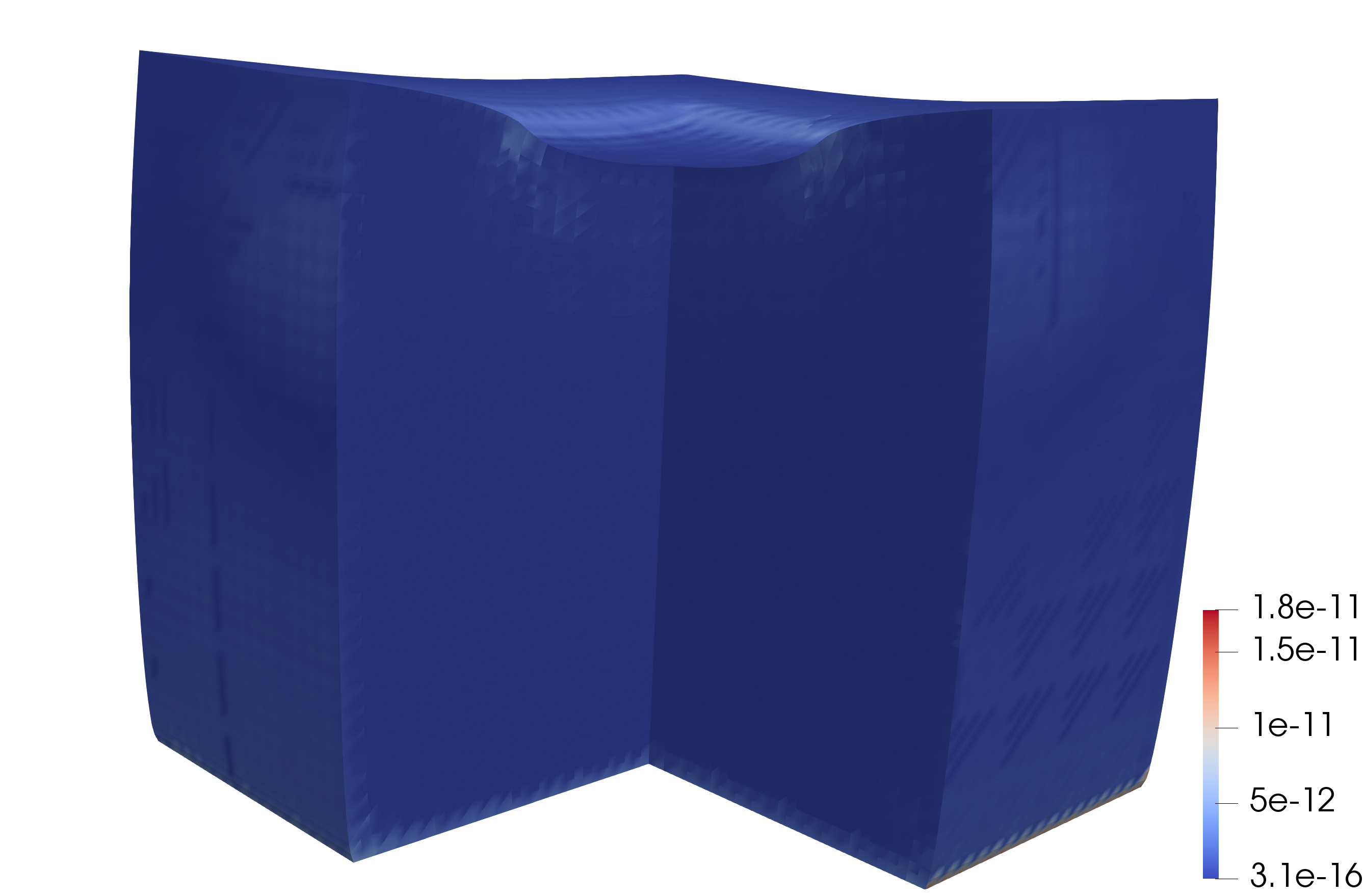}
    \includegraphics[width=0.32\textwidth]{./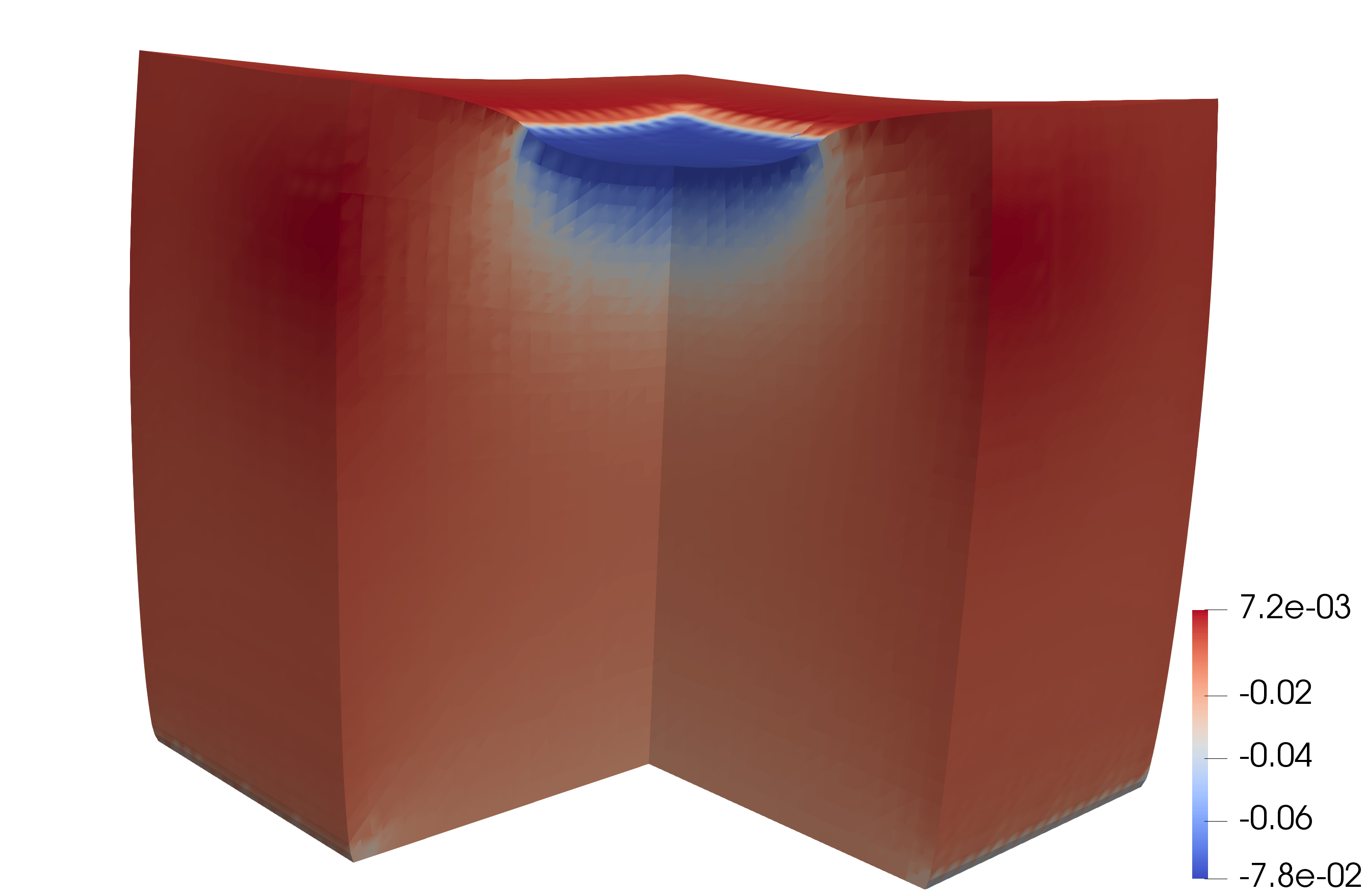}
    \includegraphics[width=0.32\textwidth]{./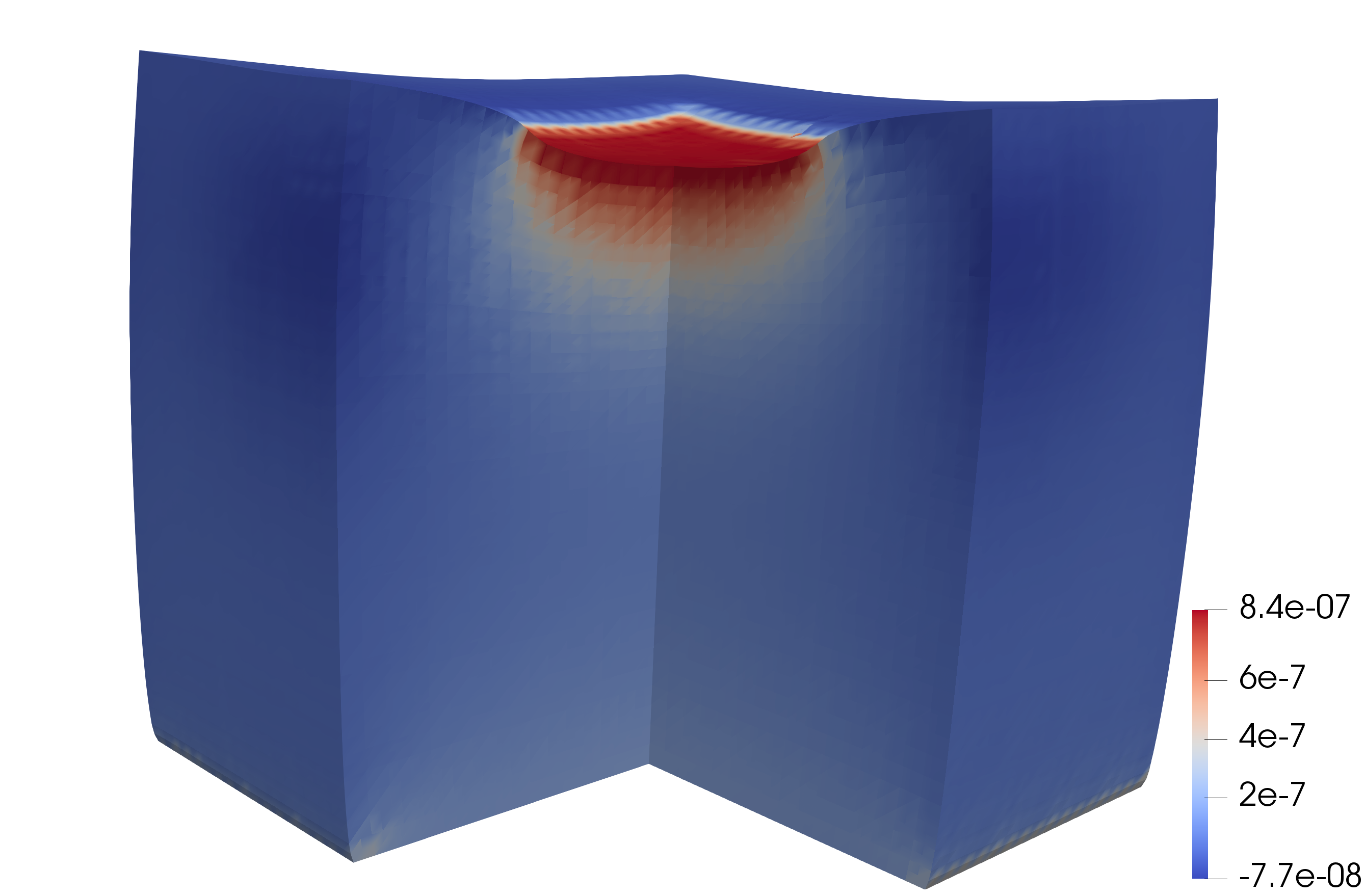}    
  \end{center}
  \caption{Approximate solutions for the footing problem at $t=0.5\,\text{s}$. 
    The percolation velocity magnitude (left), total pressure (center) and
    fluid pressure (right) are shown on a domain deformed by the computed 
    displacement (scaled by a factor of $10^6$).
  }
  \label{fig:footing}
\end{figure}
\bibliographystyle{siamplain}
\bibliography{references}

\bigskip
\appendix
\section{Stability of the  preconditioner for the generalized Poisson problem}\label{sec:other_elements}
This section presents results of numerical experiments showing robustness
of the proposed preconditioners when different (than in the main article
text) finite elements are used for the discretization.

\begin{figure}[h]
  \begin{center}
    \includegraphics[width=\textwidth]{./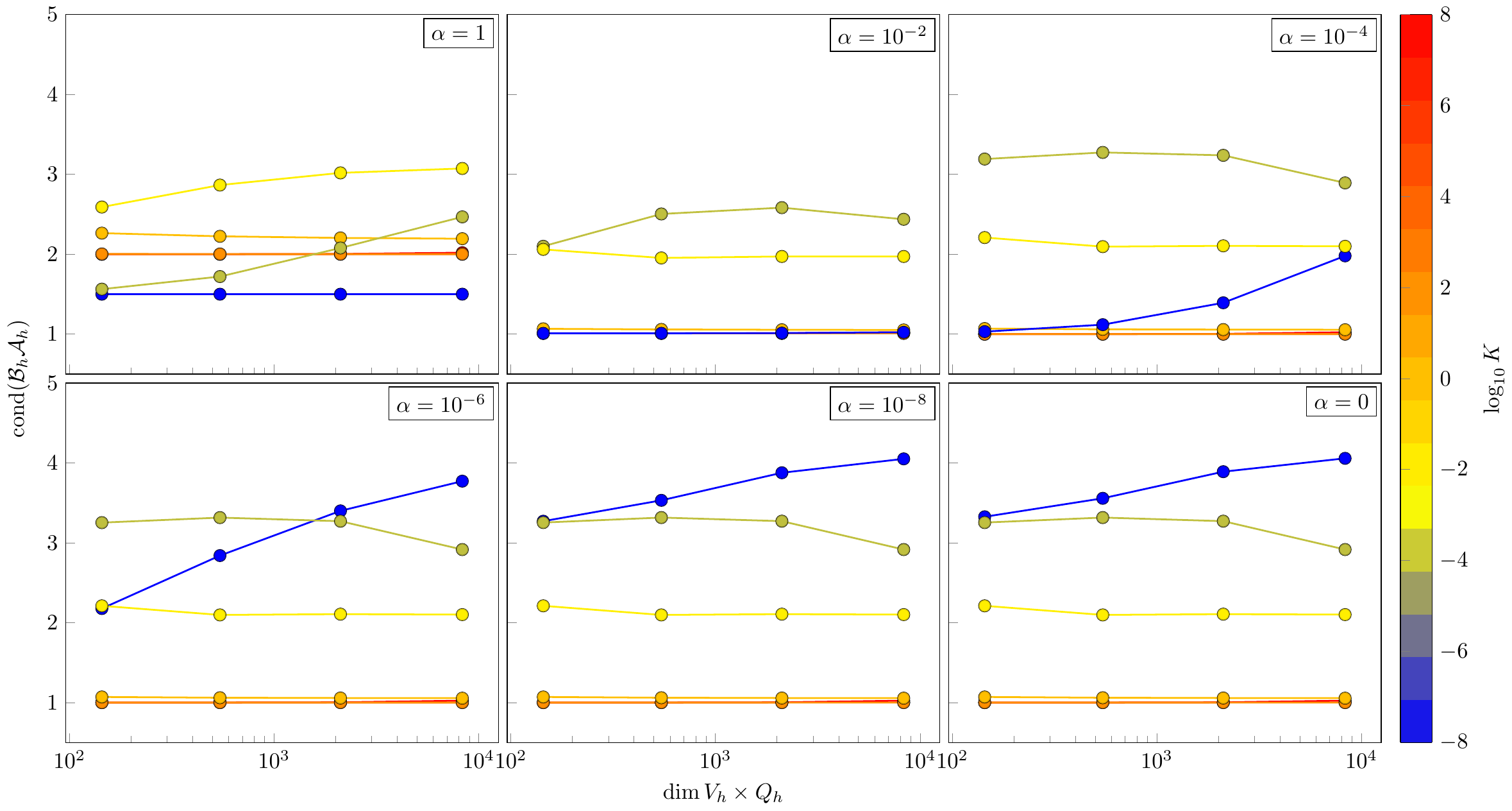}
    
    \vspace{-3mm}
    \caption{
      Performance of preconditioner \eqref{eq:darcy_B} for
      the simplified Helmholtz equation with potential boundary conditions.
      Discretization by $\mathbb{B}\mathbb{D}\mathbb{M}_1$-$\mathbb{P}_0$ elements.
    }
    \label{fig:helm_p_bdm1}
  \end{center}
\end{figure}

\begin{figure}
  \begin{center}
    \includegraphics[width=\textwidth]{./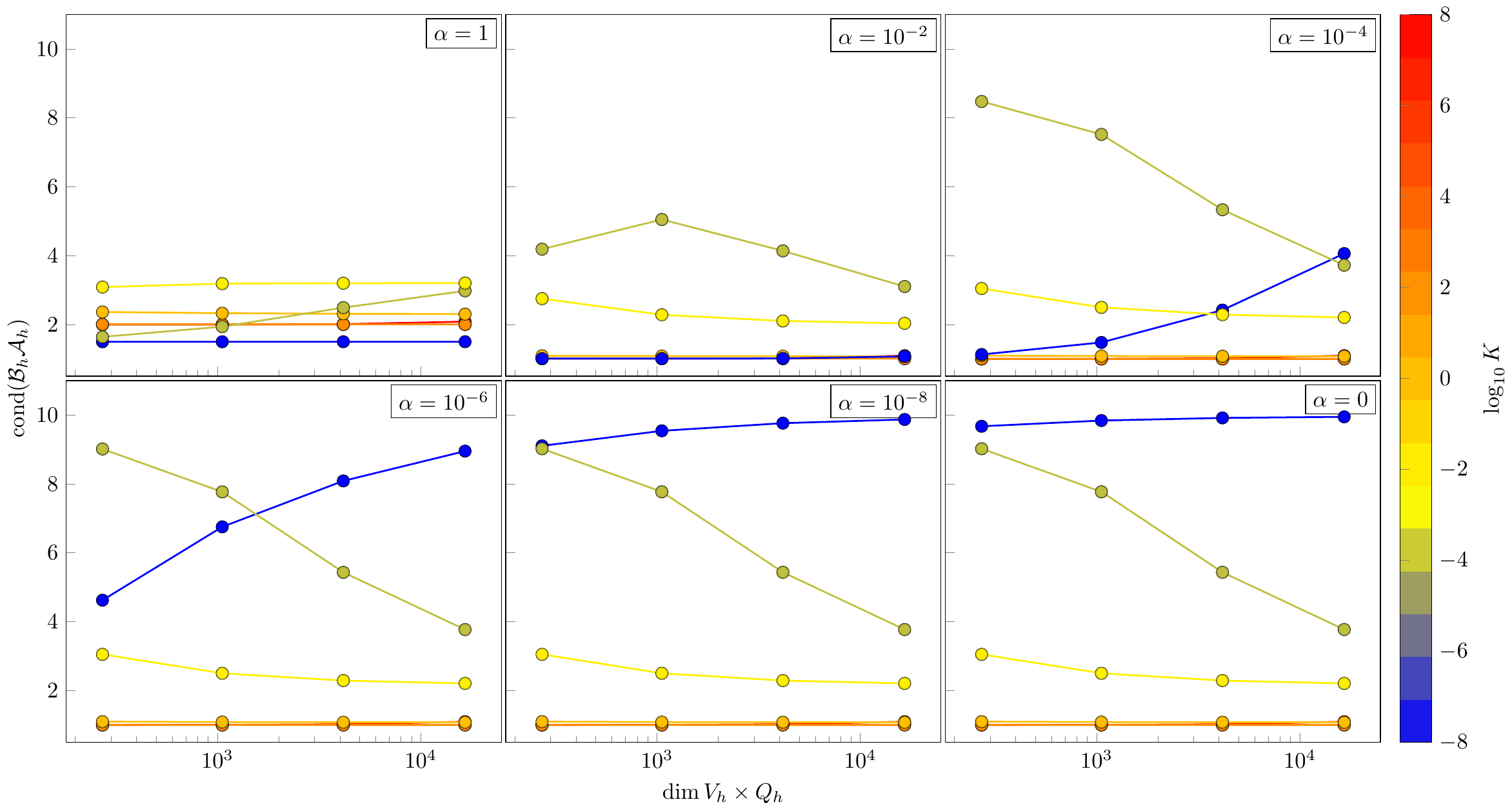}
    
    \vspace{-3mm}
    \caption{
      Performance of preconditioner \eqref{eq:darcy_B} for
      the generalized Poisson equation with potential boundary conditions.
      Discretization by $\mathbb{R}\mathbb{T}_1$-$\mathbb{P}_1$ elements.
    }
    \label{fig:helm_p_rt1}
  \end{center}
\end{figure}

\begin{figure}
  \begin{center}
    \includegraphics[width=\textwidth]{./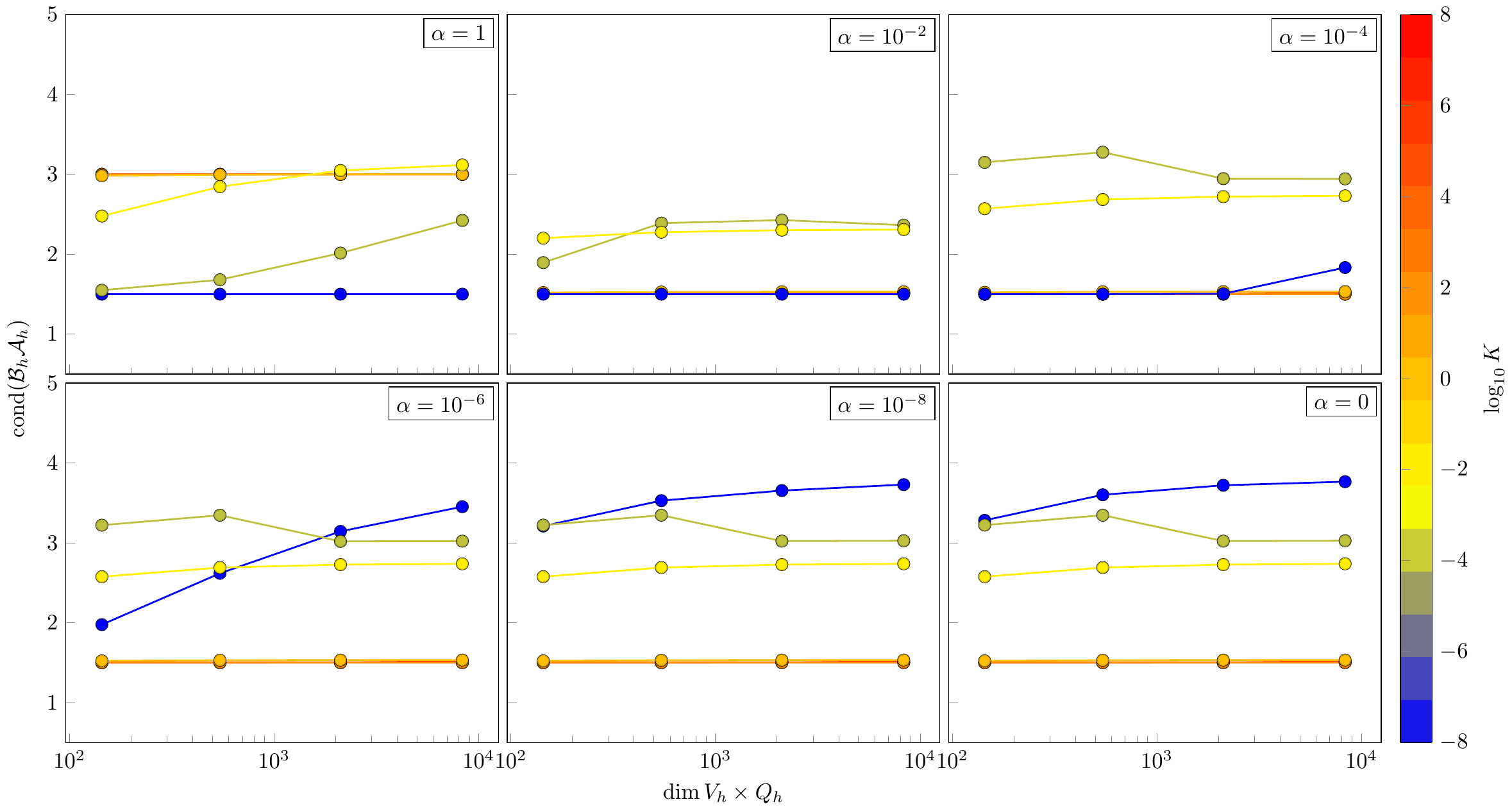}
    
    \vspace{-3mm}
    \caption{
      Performance of preconditioner \eqref{eq:darcy_B_neumann} for
      the generalized Poisson equation \eqref{eq:Neumann mixed K} with flux boundary conditions.
      Discretization by $\mathbb{B}\mathbb{D}\mathbb{M}_1$-$\mathbb{P}_0$ elements.
    }
    \label{fig:helm_flux_bdm1}
  \end{center}
\end{figure}

\section{Suboptimal preconditioners for Biot equations}
Here we present a few preconditioners that might be the natural and intuitive choices
if one starts from the original formulation  \eqref{eq:biot}, but that do not retain
robustness with respect to model parameters. For simplicity, we let
$\tau=1$ and $\mu=1$ and only focus on stability with respect
to the remaining model parameters. Numerical experiments then utilize
the same (two-dimensional) setup as the robustness study of Section \ref{sec:biot_experiments}.

First, one can suggest a preconditioner being the inverse of: 
\[
\BB_1=
 \begin{bmatrix}
-\bnabla\cdot(2\mu\beps)  & 0 & 0& 0\\
0 & K^{-1}I - \nabla\nabla\cdot & 0 & 0 \\
0 & 0 & \mu^{-1}I &  0 \\
0 & 0 & 0 & I 
\end{bmatrix}
 \]
 And one expects that the performance might be compromised for some
 combinations of the model parameters $\alpha$, $c$, $K$, $\lambda$, cf.~Table \ref{tab:biot_subopt_B1}. Therefore we consider other options. For
 instance, exploring preconditioners based on the inverse of

\begin{table}[ht]
\setlength{\tabcolsep}{3pt}
\begin{center}
{\footnotesize{
\begin{tabular}{c|c|c|lll}
  \hline
  \multirow{2}{*}{$c$} & \multirow{2}{*}{$K$} & \multirow{2}{*}{$\lambda$} & \multicolumn{3}{c}{$h$}\\
  \cline{4-6}
  & & & {$2^{-2}$ } & {$2^{-3}$} & {$2^{-4}$}\\
\hline  
  \multirow{9}{*}{1} & \multirow{3}{*}{$10^{-8}$} & 1 & 5.25 & 5.28 & 5.30\\
&  & $10^{3}$ & 6.75 & 7.08 & 7.22\\
  &  & $10^{9}$ & 6.77 & 7.11 & 7.25\\
  \cline{2-6}
&\multirow{3}{*}{$10^{-4}$} & 1 & 5.25 & 5.30 & 5.42\\
&  & $10^{3}$ & 6.75 & 7.08 & 7.22\\
  &  & $10^{9}$ & 6.77 & 7.11 & 7.25\\
  \cline{2-6}  
&\multirow{3}{*}{1} & 1 & 6.19 & 6.27 & 6.31\\
&  & $10^{3}$ & 7.13 & 7.23 & 7.28\\
&  & $10^{9}$ & 7.15 & 7.26 & 7.30\\
\hline
\end{tabular}}}
\hspace{10pt}
{\footnotesize{
\begin{tabular}{c|c|c|lll}
  \hline
  \multirow{2}{*}{$c$} & \multirow{2}{*}{$K$} & \multirow{2}{*}{$\lambda$} & \multicolumn{3}{c}{$h$}\\
  \cline{4-6}
  & & & {$2^{-2}$ } & {$2^{-3}$} & {$2^{-4}$}\\
  \hline
  \multirow{9}{*}{$10^{-2}$} & \multirow{3}{*}{$10^{-8}$} & 1 & 17.62 & 18.02 & 18.20\\
  &   & $10^{3}$ & 139 & 144 & 146\\
  &   & $10^{9}$ & 153 & 158 & 160\\
  \cline{2-6}
&\multirow{3}{*}{$10^{-4}$} & 1 & 17.53 & 17.91 & 18.17\\
  &   & $10^{3}$ & 128 & 132 & 134\\
  &   & $10^{9}$ & 139 & 144 & 146\\
  \cline{2-6}
&\multirow{3}{*}{1} & 1 & 3.91 & 3.94 & 3.95\\
  &   & $10^{3}$ & 6.75 & 7.08 & 7.22\\
  &   & $10^{9}$ & 6.77 & 7.11 & 7.25\\
\hline
\end{tabular}}}
\end{center}
\caption{Condition numbers of $\BB_1$-preconditioned Biot problem \eqref{eq:biot} with
  $\alpha=1$, $\mu=1$. The preconditioner seems robust in $K$ and $\lambda$ for $c=1$ but
  not for $c<1$.
}
\label{tab:biot_subopt_B1}
\end{table}
 \[
 \BB_2=
 \begin{bmatrix}
 -\bnabla\cdot(2\mu\beps) & 0 & 0 & 0\\
 0 & K^{-1}(I - \nabla\nabla\cdot) & 0 & 0 \\
 0 & 0 & \mu^{-1}I &  0\\
 0 & 0 & 0 & KI
 \end{bmatrix},
 \]
 or, alternatively, using 
 \[
 \BB_3=
 \begin{bmatrix}
-\bnabla\cdot(2\mu\beps)  & 0 &  0 & 0 \\
 0 & K^{-1}I - \nabla\nabla\cdot & 0 & 0  \\
 0 & 0 & \mu^{-1}I &  0\\
 0 & 0 & 0 & (I +  (- \nabla\cdot K \nabla)^{-1})^{-1} 
 \end{bmatrix}.
 \]
 Finally, we can consider the following modification of $\BB_2$:
 \[
 \BB_4=
 \begin{bmatrix}
\bnabla\cdot(2\mu\beps)  & 0 & 0 & 0\\
 0 &  K^{-1}(I - \nabla\nabla\cdot) & 0 & 0  \\
 0 & 0 & (\mu^{-1} + \frac{1}{\lambda})I & \frac{\alpha}{\lambda}I \\
 0 & 0 & \frac{\alpha}{\lambda}I & (K + c + \frac{\alpha^2}{\lambda})I
 \end{bmatrix}.       
 \]
 However these preconditioners yield sub-optimal performance,
 as evidenced in Table~\ref{tab:biot_subopt_Bother}.

 \begin{table}[h]
\setlength{\tabcolsep}{3pt}
\begin{center}
{\footnotesize{
\begin{tabular}{c|c|lll||lll||lll}
\hline
\multirow{2}{*}{$K$} & \multirow{2}{*}{$\lambda$} & \multicolumn{3}{c||}{$h$} & \multicolumn{3}{c||}{$h$} & \multicolumn{3}{c}{$h$}\\
  \cline{3-11}
 & & {$2^{-2}$ } & {$2^{-3}$} & {$2^{-4}$} & {$2^{-2}$ } & {$2^{-3}$} & {$2^{-4}$} & {$2^{-2}$ } & {$2^{-3}$} & {$2^{-4}$}\\
  \hline
\multirow{3}{*}{$10^{-8}$} & 1 & $1 \times 10^{11}$ & $5 \times 10^{11}$ & $2 \times 10^{12}$  &  $9 \times 10^{7}$ & $8 \times 10^{7}$ & $7 \times 10^{7}$ & 721 & $3 \times 10^{3}$ & $1 \times 10^{4}$\\
                    & $10^{3}$ & $5 \times 10^{10}$ & $2 \times 10^{11}$ & $9 \times 10^{11}$  &  $1 \times 10^{8}$ & $1 \times 10^{8}$ & $1 \times 10^{8}$ & 833 & $4 \times 10^{3}$ & $1 \times 10^{4}$\\
                    & $10^{9}$ & $5 \times 10^{10}$ & $2 \times 10^{11}$ & $9 \times 10^{11}$  &  $1 \times 10^{8}$ & $1 \times 10^{8}$ & $1 \times 10^{8}$ & 833 & $4 \times 10^{3}$ & $1 \times 10^{4}$\\
\hline
\multirow{3}{*}{$10^{-4}$} & 1 & $1 \times 10^{7}$ & $4 \times 10^{7}$ & $1 \times 10^{8}$ &  $9 \times 10^{3}$ & $8 \times 10^{3}$ & $7 \times 10^{3}$ & 693 & $3 \times 10^{3}$ & $7 \times 10^{3}$\\
                    & $10^{3}$ & $5 \times 10^{6}$ & $2 \times 10^{7}$ & $5 \times 10^{7}$ &  $1 \times 10^{4}$ & $1 \times 10^{4}$ & $1 \times 10^{4}$ & 790 & $3 \times 10^{3}$ & $8 \times 10^{3}$\\
                    & $10^{9}$ & $5 \times 10^{6}$ & $2 \times 10^{7}$ & $5 \times 10^{7}$ &  $1 \times 10^{4}$ & $1 \times 10^{4}$ & $1 \times 10^{4}$ & 790 & $3 \times 10^{3}$ & $8 \times 10^{3}$\\
\hline
\multirow{3}{*}{1} & 1 & 6.19 & 6.27 & 6.31 &   7.35 & 7.23 & 7.18 & 3.33 & 3.46 & 3.52\\
             & $10^{3}$ & 7.13 & 7.23 & 7.28 &  8.37 & 8.25 & 8.19 & 6.75 & 7.08 & 7.23\\
             & $10^{9}$ & 7.15 & 7.26 & 7.30 &  8.39 & 8.28 & 8.22 & 6.77 & 7.11 & 7.25\\

\hline
\end{tabular}}}
\end{center}
\caption{Condition numbers of \eqref{eq:biot} with $\BB_2$ preconditioner (left),
  $\BB_3$ preconditioner (center) and $\BB_4$ preconditioner (right). In all cases
  $\alpha=1$, $\mu=1$, $c=1$. The preconditioners are not $K$, $\lambda$-robust.
}
\label{tab:biot_subopt_Bother}
\end{table}


 \newpage
 \section{Herrmann formulation of linear elasticity}
 \label{herrmann}
 Let us consider a $2d$ domain on which the equations of linear elasticity are written as 
 \[
 \begin{aligned}
 -\bnabla\cdot\left(2\mu\beps(\bu) + pI\right)&=f &\mbox{ in }\Omega,\\
 \nabla\cdot\bu - \lambda^{-1}p &= 0 &\mbox{ in }\Omega,
 \end{aligned}
 \]
 with pure displacement  boundary conditions. 
 In view of constructing locking-free solvers, 
 one is interested in discretizations that are robust with respect to $\lambda$ (and also with respect to $\mu$).  

 The following  preconditioner is robust: 
\begin{equation}\label{eqB:herrmann}
 \mathcal{B}_H=\begin{bmatrix}
 -\bnabla\cdot\left(2\mu\beps(\bu)\right) & 0 \\
0 & \mu^{-1}(I-\Pi_{\mathbb{R}}) + \lambda^{-1}I
 \end{bmatrix}^{-1},
\end{equation}
as we can see in Table~\ref{table:herrmann}.

 \begin{table}[h]
\begin{center}
   \scriptsize{
    \begin{tabular}{c|c|ccc}
\hline      
      \multirow{2}{*}{$\lambda$} & \multirow{2}{*}{$\mu$} & \multicolumn{3}{c}{$h$} \\
      \cline{3-5}
      & & {$2^{-2}$ } & {$2^{-3}$} & {$2^{-4}$} \\
\hline      
\multirow{8}{*}{1} & $10^{-6}$ & 18.13 & 18.19 & 18.20\\
  & $10^{-4}$ & 18.11 & 18.17 & 18.18\\
  & $10^{-2}$ & 16.22 & 16.27 & 16.28\\
  & 1 & 2.19 & 2.19 & 2.19\\
  & $10^{2}$ & 1.01 & 1.01 & 1.01\\
  & $10^{4}$ & 1.00 & 1.00 & 1.00\\
  & $10^{8}$ & 1.00 & 1.00 & 1.00\\
  & $10^{10}$ & 1.00 & 1.00 & 1.00\\
 \hline
\multirow{8}{*}{$10^{2}$} & $10^{-6}$ & 18.13 & 18.19 & 18.20\\
  & $10^{-4}$ & 18.13 & 18.19 & 18.20\\
  & $10^{-2}$ & 18.11 & 18.17 & 18.18\\
  & 1 & 16.22 & 16.27 & 16.28\\
  & $10^{2}$ & 2.19 & 2.19 & 2.19\\
  & $10^{4}$ & 1.01 & 1.01 & 1.01\\
  & $10^{8}$ & 1.00 & 1.00 & 1.00\\
  & $10^{10}$ & 1.00 & 1.00 & 1.00\\
\hline
    \end{tabular}
    \hspace{10pt}
    \begin{tabular}{c|c|ccc}
\hline      
      \multirow{2}{*}{$\lambda$} & \multirow{2}{*}{$\mu$} & \multicolumn{3}{c}{$h$} \\
      \cline{3-5}
      & & {$2^{-2}$ } & {$2^{-3}$} & {$2^{-4}$} \\
\hline      
\multirow{8}{*}{$10^{4}$} & $10^{-6}$ & 18.13 & 18.19 & 18.20\\
  & $10^{-4}$ & 18.13 & 18.19 & 18.20\\
  & $10^{-2}$ & 18.13 & 18.19 & 18.20\\
  & 1 & 18.11 & 18.17 & 18.18\\
  & $10^{2}$ & 16.22 & 16.27 & 16.28\\
  & $10^{4}$ & 2.19 & 2.19 & 2.19\\
  & $10^{8}$ & 1.00 & 1.00 & 1.00\\
  & $10^{10}$ & 1.00 & 1.00 & 1.00\\
\hline
\multirow{8}{*}{$10^{8}$} & $10^{-6}$ & 18.13 & 18.19 & 18.20\\
  & $10^{-4}$ & 18.13 & 18.19 & 18.20\\
  & $10^{-2}$ & 18.13 & 18.19 & 18.20\\
  & 1 & 18.13 & 18.19 & 18.20\\
  & $10^{2}$ & 18.13 & 18.19 & 18.20\\
  & $10^{4}$ & 18.11 & 18.17 & 18.18\\
  & $10^{8}$ & 2.19 & 2.19 & 2.19\\
  & $10^{10}$ & 1.01 & 1.01 & 1.01\\
\hline
    \end{tabular}
   }
\end{center}
\caption{Condition numbers associated with the preconditioner \eqref{eqB:herrmann} for the Herrmann problem.}\label{table:herrmann}
\end{table}

In regards to Theorem \ref{thm: Braess}, it is clear that for $\lambda=\infty$, the Brezzi conditions
 are satisfied as the problem is then reduced to the 
 Stokes problem on $\mu^{1/2}\bH^1_0(\Omega)\times \mu^{-1/2} L^2_0(\Omega)$. Furthermore, as the bilinear form $a(\cdot, \cdot)$ 
 is coercive in the whole space $\mu^{1/2}\bH^1_0(\Omega)$ the Braess condition is automatically satisfied.

\end{document}